\documentclass[11pt]{amsart}

\usepackage{amssymb}
\usepackage{verbatim}
\usepackage{hyperref}

\textwidth=16.51cm
\textheight=22.8cm
\topmargin=-0.63cm
\oddsidemargin=0cm
\evensidemargin=0cm
\setlength{\parskip}{3pt}

\numberwithin{equation}{section}
\hyphenation{semi-stable}

\newtheorem{theorem}{Theorem}[section]
\newtheorem{lemma}[theorem]{Lemma}
\newtheorem{proposition}[theorem]{Proposition}
\newtheorem{corollary}[theorem]{Corollary}

\theoremstyle{definition}

\newtheorem{remark}[theorem]{Remark}
\newtheorem{example}[theorem]{Example}

\newtheorem{problem}[theorem]{Problem}

\newcommand{\pr}{\mathbb{P}}
\renewcommand{\P}{\mathbb{P}}
\newcommand{\QQ}{\mathbb{Q}}
\newcommand{\calo}{{\mathcal O}}

\newcommand{\cali}{{\mathcal I}}

\newcommand{\enrs}{e_{n,r,s}}

\newcommand{\conjA}{A}
\newcommand{\conjB}{B}
\newcommand{\conjC}{C}

\def\PP{\mathbb{P}}
\def\sys{\mathcal{L}}

\newtheorem{thevarthm}[theorem]{\varthmname}

\newenvironment{varthm*}[1]{\trivlist\item[]{\bf #1.}\it}{\endtrivlist}


\begin{document}


\title[Linear subspaces and symbolic powers]{Linear subspaces, symbolic powers and Nagata type conjectures}

\author{Marcin Dumnicki}
\address{Jagiellonian University\\
Institute of Mathematics\\
{\L}ojasiewicza 6\\
30-059 Krak\'ow}
\email{Marcin.Dumnicki@im.uj.edu.pl}

\author{Brian Harbourne}
\address{Department of Mathematics \\
University of Nebraska--Lincoln\\
Lincoln, NE 68588-0130, USA }
\email{bharbour@math.unl.edu}

\author{Tomasz Szemberg}
\address{Instytut Matematyki UP\\
PL-30-084 Krak\'ow, Poland}
\email{szemberg@ap.krakow.pl}

\author{Halszka Tutaj-Gasi\'nska}
\address{Jagiellonian University\\
Institute of Mathematics\\
{\L}ojasiewicza 6\\
PL-30348 Krak\'ow}
\email{htutaj@im.uj.edu.pl }

\keywords{symbolic powers, polynomial rings, subschemes of projective space, Nagata Conjecture}
\subjclass[2000]{13A02, 
13A15, 
13F20, 
13P99, 
14C20, 
14N20, 
14Q99
}
\date{September 28, 2012}

\begin{abstract}
Inspired by results of Guardo, Van Tuyl and the second author for lines in $\pr^3$,
we develop asymptotic upper bounds for the least degree of a homogeneous
form vanishing to order at least $m$ on a union of disjoint
$r$ dimensional planes in $\pr^n$ for $n\geq 2r+1$.
These considerations lead to new conjectures that suggest that the well
known conjecture of Nagata for points in $\P^2$ is not
an exotic statement but rather a manifestation of
a much more general phenomenon which seems to have been overlooked so far.
\end{abstract}

\maketitle



\section{Introduction}
   It has been anticipated that the statement of the Nagata Conjecture (see Remark \ref{Nagrem})
   generalizes to points in higher dimensional projective space, e.g.
   in \cite{refI}, \cite{LafUga07}. In the present paper we show
   that the pioneering idea of Guardo, Van Tuyl and the second author
   to replace points by lines
   \cite{refGHVT}, extends to the general setting of linear subspaces
   of arbitrary dimension. Corollary \ref{cor:diff eq tower} exhibits
   in addition surprising ties between emerging structures.
   A posteriori our research provides interesting information
   on the shape of the nef cone of projective spaces blown up
   along a collection of disjoint linear subspaces (see Remark \ref{rmk:nef cone}).

For any nonzero homogeneous ideal $J\subset k[\pr^n]$,
let $J_t$ be the $k$-vector space span of the homogeneous forms in $J$ of degree $t$,
and let $\alpha(J)$ be the least $t$ such that $J_t\neq 0$, so $\alpha(J)$
is the degree of a nonzero element of least degree in $J$.
These invariants have been studied for a long time, see e.g. \cite{EV83}, \cite{refW}.
An asymptotic counterpart which we refer to as the \emph{Waldschmidt constant},
was studied by Waldschmidt \cite{refW} in case $J$ is an ideal of
a finite number of points. We define it more generally as
\begin{equation}\label{eq:WalConst}
\gamma(J):=\lim_{m\to\infty}\frac{\alpha(J^{(m)})}{m} =\inf_{m}\frac{\alpha(J^{(m)})}{m},
\end{equation}
where $J^{(m)}$ denotes the $m$th symbolic power of $J$.
See \cite[Lemma 2.3.1]{refBH} for the existence of the limit in \eqref{eq:WalConst} for arbitrary nonzero
homogeneous ideals, as well as for a justification of the second equality in \eqref{eq:WalConst}.

It is usually very hard to compute $\alpha$ for large symbolic powers of an ideal,
with the result that there are not many ideals for which the value of $\gamma$ is known.
This makes it of interest to compute values of $\gamma$ in specific examples, and even to
obtain bounds on $\gamma$. In the present paper, motivated by this interest and by
the results of \cite{refGHVT} for lines in $\pr^3$, we study Waldschmidt constants for
ideals $J_{n,r,s}$ of unions of $s$ generic $r$--dimensional
subspaces in the projective space $\P^n$, where by \emph{linear subspace} we
mean a subvariety defined by $n$ or fewer linear forms.
We introduce polynomials $\Lambda_{n,r,s}(x)\in \QQ[x]$ (see Proposition \ref{lem:Lambdan0s})
tied together by a tower of differential equations (see Corollary \ref{cor:diff eq tower})
such that
$$\gamma(J_{n,r,s})\leq g_{n,r,s},$$
where $g_{r,n,s}$ is the largest real root of $\Lambda_{n,r,s}$, and we conjecture that
equality holds for $s\gg0$.
In particular, this conjecture implies that $\gamma(J_{n,r,s})$ is algebraic over the rationals for $s\gg0$.
This conjecture generalizes and extends famous conjectures of Nagata and Iarrobino; see Remarks \ref{Nagrem} and \ref{rem:conjs}.

The content of this paper is as follows. In Section 2 we prove
our main theorem, Theorem \ref{thm:zeroes of Lambda},
giving upper bounds on $\gamma(I)$ for ideals of disjoint unions of $r$-planes in $\pr^n$, and we state
three conjectures, including the one referred to above.
In Section 3 we prove one of our conjectures, Conjecture {\conjB}, in the case that $r\leq 1$.
In Section 4 we work out an example giving support for Conjecture {\conjC}.
We also include two appendices. The first contains results involving binomial
coefficients needed for the proof of Theorem \ref{thm:zeroes of Lambda}.
The second appendix includes computations
demonstrating various techniques for computing $\gamma(J_{n,r,s})$
for various specific values of $n,r,s$. Explicit
values of $\gamma(J_{n,r,s})$ are of interest even when $s$ is not necessarily large.

\section{Hilbert functions of disjoint fat flats}
Let $k$ be an algebraically closed field,
$L_1,\ldots,L_s\subset \pr^n_k$ distinct linear subspaces of dimension $r$, and let
$I=I(L_1\cup\cdots\cup L_s)=I(L_1)\cap\cdots\cap I(L_s)\subset k[\pr^n]=k[x_0,\ldots,x_n]$
be the ideal generated by all forms that vanish on each $L_i$. In this situation the $m$th symbolic
power of $I$ has a simple form:
$$I^{(m)}=I(L_1)^m\cap\cdots\cap I(L_s)^m.$$
A fundamental problem in several areas of algebra and geometry is to determine the
least degree of a nonzero homogeneous form in $k[\pr^n]$ vanishing to order at least $m$
on a union of linear subvarieties; i.e., to determine the numbers $\alpha(I^{(m)})$.

By a \emph{fat flat} we mean a scheme defined by $I^{(m)}$,
where $I$ is the radical ideal of a linear subspace $L$ of $\pr^n$ (note in this case that $I^{(m)}=I^m$).
We say that a form $f$ vanishes to order at least $m$ along $L$, if $f\in I^{(m)}$.
We begin by determining the number of conditions such vanishing imposes on forms
of degree $t$.
   The formula without proof appears in \cite[Section 2.2]{Boc05} and in a slightly
   more general setting again without proof in \cite[Corollary 3.4]{refDer}.
   At the risk of belaboring the obvious, we include the proof and show additionally that the expected dimension
   is the actual dimension in degrees $t \geq m$.

\begin{lemma}\label{lem:NumCond}
Let $L$ be a linear subspace of $\P^n$ of dimension $r<n$.
Let $t\geq m$ be positive integers. Then
vanishing to order at least $m$ along $L$ imposes exactly
$$c_{n,r,m,t}=\sum_{0\leq i<m} \binom{t-i+r}{r}\binom{i+n-r-1}{n-r-1}$$
linearly independent conditions on forms of degree $t$ (i.e., $\dim_k ((k[\pr^n])_t/(I(L)^{(m)})_t)=c_{n,r,m,t}$).
Moreover, when $m=t$, we have $c_{n,r,m,m}=\binom{m+n}{n}-\binom{m+n-r-1}{n-r-1}$.
\end{lemma}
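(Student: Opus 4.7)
The plan is to reduce to the case where $L$ is a coordinate subspace and then do a direct monomial count, so the main work is bookkeeping rather than a clever argument.

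First I would note that since $\mathrm{PGL}_{n+1}$ acts transitively on the set of $r$--planes in $\pr^n$ (and a projective change of coordinates is an isomorphism of graded rings), we may assume $L = V(x_{r+1},\ldots,x_n)$. Then $I(L) = (x_{r+1},\ldots,x_n)$ is generated by a regular sequence of length $n-r$, so symbolic and ordinary powers coincide: $I(L)^{(m)}=I(L)^m$. Moreover $I(L)^m$ is a monomial ideal: it is spanned by those monomials $x_0^{a_0}\cdots x_n^{a_n}$ whose degree in the last $n-r$ variables, namely $a_{r+1}+\cdots+a_n$, is at least $m$. Hence the monomials whose degree in $x_{r+1},\ldots,x_n$ is at most $m-1$ descend to a $k$--basis of the quotient $k[\pr^n]/I(L)^m$.

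Next I would count these basis monomials in degree $t$ by fibering over the value $i := a_{r+1}+\cdots+a_n \in\{0,1,\ldots,m-1\}$. For fixed $i$, the number of degree--$i$ monomials in the $n-r$ variables $x_{r+1},\ldots,x_n$ is $\binom{i+n-r-1}{n-r-1}$, and the number of degree--$(t-i)$ monomials in the $r+1$ variables $x_0,\ldots,x_r$ is $\binom{t-i+r}{r}$. Summing gives
$$
\dim_k\bigl((k[\pr^n])_t/(I(L)^{(m)})_t\bigr)
=\sum_{0\le i<m}\binom{t-i+r}{r}\binom{i+n-r-1}{n-r-1}=c_{n,r,m,t},
$$
which is precisely the number of independent linear conditions imposed. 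The hypothesis $t\ge m$ guarantees $t-i\ge 1$ for every $i<m$, so none of the summands is forced to vanish for degenerate reasons; for $t<m$ the same counting argument combined with the Vandermonde identity $\sum_{i=0}^{t}\binom{t-i+r}{r}\binom{i+n-r-1}{n-r-1}=\binom{t+n}{n}$ would simply recover the fact that every form of degree $t<m$ vanishes automatically along the fat flat.

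For the ``moreover'' assertion I would apply the same Vandermonde identity at $t=m$: extending the sum to include $i=m$ adds the single term $\binom{r}{r}\binom{m+n-r-1}{n-r-1}=\binom{m+n-r-1}{n-r-1}$, and the extended sum telescopes to $\binom{m+n}{n}$, giving
$$
c_{n,r,m,m}=\binom{m+n}{n}-\binom{m+n-r-1}{n-r-1}.
$$
This can also be seen geometrically, since in degree exactly $m$ the only monomials lying in $I(L)^m$ are those of pure degree $m$ in $x_{r+1},\ldots,x_n$, of which there are $\binom{m+n-r-1}{n-r-1}$.

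There is no real obstacle here; the only point requiring a little care is the opening reduction, which is why it is worth explicitly recording that $I(L)^{(m)}=I(L)^m$ for linear $L$ (so one is genuinely computing the Hilbert function of the scheme-theoretic fat flat rather than of some possibly larger symbolic object).
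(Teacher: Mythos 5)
Your proof is correct and follows essentially the same route as the paper's: reduce to the coordinate subspace $V(x_{r+1},\ldots,x_n)$ and count the monomials of degree $t$ lying outside the monomial ideal $I(L)^m$ according to their degree $i<m$ in the last $n-r$ variables. The only cosmetic difference is in the ``moreover'' clause, where the paper directly counts all degree-$m$ monomials minus those supported on $x_{r+1},\ldots,x_n$ instead of invoking the Vandermonde convolution, and your explicit justifications of the coordinate change and of $I(L)^{(m)}=I(L)^m$ are points the paper simply takes for granted.
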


\proof
After a change of coordinates,
we may assume that $I(L)\subset k[\pr^n]=k[x_0,\ldots,x_n]$ is generated by
$x_{r+1},\ldots,x_n$.
A monomial $\mu=x_0^{m_0}\cdots x_n^{m_n}$ of degree $t$ vanishes to order less than $m$ on $L$
if and only if $\mu\not\in I^m$; i.e., if and only if $m_{r+1}+\cdots+m_n<m$, and hence if and only if
$m_0+\cdots+m_r\geq t-m$. Thus $\mu\not\in I^m$ if and only if $\mu=\mu_{1i}\mu_{2i}$
for some $0\leq i<m$ where $\mu_{1i}=x_0^{m_0}\cdots x_r^{m_r}$ has degree $t-i$
and $\mu_{2i}=x_{r+1}^{m_{r+1}}\cdots x_n^{m_n}$ has degree $i$.
Since there are $\binom{t-i+r}{r}$ such monomials $\mu_{1i}$, and
$\binom{i+n-r-1}{n-r-1}$ such monomials $\mu_{2i}$, there are $\binom{t-i+r}{r}\binom{i+n-r-1}{n-r-1}$
monomials of the form $\mu_{1i}\mu_{2i}$ and the claim follows by summing over $i$.
When $m=t$, we are counting all monomials of degree $m$ (of which there are $\binom{m+n}{n}$)
except those of degree $m$ in $x_{r+1},\ldots, x_n$ (of which there are $\binom{m+n-r-1}{n-r-1}$),
from which our second claim follows immediately.
\endproof

\begin{remark}
A recursive approach can be used to find $c_{n,r,m,t}$.
It is easy to check that the scheme $mL$ defined by the ideal
$I(mL)=I(L)^{(m)}$, where $L$ is an $r$-plane $L\subsetneq \P^n$,
is arithmetically Cohen-Macaulay (alternatively,
see, for example, \cite[Theorem 3.1]{refGHM2}). This means that
$\Delta^r H_{n,r,m}(t)=H_{n-r,0,m}(t)$, where
$H_{i,j,m}$ is the Hilbert function of a $j$-plane in $\P^i$ of multiplicity $m$.
(So, for example, $H_{n,r,m}(t)=\dim_k (k[\P^n]/I(mL))_t$. Also,
given any function $f:{\bf N}\to {\bf N}=\{0,1,2,\ldots\}$,
we define the \emph{difference operator} $\Delta f$ as
$\Delta f(t)=f(t)-f(t-1)$ for $t>0$ and $\Delta f(0)=f(0)$.)
Since $H_{n-r,0,m}(t)=\min\{\binom{t+n-r}{n},\binom{m+n-r-1}{n-r}\}$,
one can use iterated summation to recover $H_{n,r,m}$ from $H_{n-r,0,m}$.
But $c_{n,r,m,t}=H_{n,r,m}(t)$ for $t\geq m$, so we also recover $c_{n,r,m,t}$.
For example, regard $H_{n,r,m}$ as the sequence
$\{H_{n,r,m}(0), H_{n,r,m}(1), H_{n,r,m}(2),\ldots, H_{n,r,m}(t),\ldots\}$. Taking $m=4$,
$r=2$ and $n=4$, we have
$H_{2,0,4}=\{1,3,6,10,10,10,\ldots, 10,\ldots\}$, so
$H_{3,1,4}=\{1,4,10,20,30,40,\ldots, 10(t-1),\ldots\}$ and
$H_{4,2,4}=\{1,5,15,35,65,105,\ldots, 5t^2-5t+5,\ldots\}$.
Thus $c_{2,0,4,t}=10$, $c_{3,1,4,t}=10t-10$ and $c_{4,2,4,t}=5t^2-5t+5$.
\end{remark}

For $t\geq m-r-1$, we note that the expression $\binom{t+n}{n}-sc_{n,r,m,t}$ is a
polynomial in $t$ of degree $n$, which we denote by $P_{n,r,s,m}(t)$.
Similarly, if ${\bf v}=(m_1,\ldots,m_s)$ is a positive integer vector,
then $\binom{t+n}{n}-\sum_ic_{n,r,m_i,t}$ is a polynomial of degree $n$ in $t$ for
$t\geq\max_i(m_i-r-1)$. We denote this polynomial by $P_{n,r,{\bf v}}(t)$.

\begin{lemma}\label{lem:HilbPolyn}
Let $L_1,\ldots,L_s$ be disjoint linear subspaces of $\P^n$ of dimension $r<n$
(note that disjointness implies $n\geq2r+1$ when $s>1$), let
${\bf v}=(m_1,\ldots,m_s)$ be a positive integer vector and let $m$ be the maximum entry.
Then $P_{n,r,{\bf v}}(t)$ is the Hilbert polynomial of $I=\cap_i I(L_i)^{m_i}$;
i.e., $\dim (I_t) = P_{n,r,{\bf v}}(t)$ for $t\gg0$.
Moreover, if $P_{n,r,{\bf v}}(t)>0$ and $t\geq m\geq 1$, then
$\dim (I_t)>0$ and hence $\alpha(I)\leq t$.
\end{lemma}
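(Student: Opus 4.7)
The plan is to treat the two claims separately, and to use disjointness only where it is actually needed. Abbreviate $R=k[\P^n]$ and $I_i = I(L_i)^{m_i}$, so that $I = \bigcap_{i=1}^s I_i$.

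For the first claim, I would argue that the pairwise disjointness of the supports $L_i$ forces the ideal sheaves $\widetilde{I_i}$ on $\P^n$ to be pairwise comaximal; equivalently, for each $i\neq j$ some power of the irrelevant ideal lies in $I_i + I_j$. The sheaf-theoretic Chinese Remainder Theorem, or equivalently a degree-by-degree Chinese Remainder argument on the homogeneous coordinate ring, then yields
$$(R/I)_t \;\cong\; \bigoplus_{i=1}^s (R/I_i)_t$$
for all sufficiently large $t$. By Lemma \ref{lem:NumCond}, $\dim_k(R/I_i)_t = c_{n,r,m_i,t}$ for every $t \geq m_i$, and since $c_{n,r,m_i,t}$ is a polynomial in $t$, it is the Hilbert polynomial of $R/I_i$. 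Summing over $i$ and subtracting from $\binom{t+n}{n}$ gives that $P_{n,r,{\bf v}}(t)$ is the Hilbert polynomial of $I$.

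For the moreover statement, disjointness is not needed: the lower bound $\dim I_t \geq P_{n,r,{\bf v}}(t)$ is purely formal. Since $I = \bigcap_i I_i$, the natural map $R/I \hookrightarrow \bigoplus_i R/I_i$ is injective in every degree, so
$$\dim_k I_t \;\geq\; \binom{t+n}{n} - \sum_i \dim_k(R/I_i)_t.$$
For $t\geq m$ we have $t\geq m_i$ for every $i$, and Lemma \ref{lem:NumCond} identifies the right-hand summand with $c_{n,r,m_i,t}$, so the right-hand side equals $P_{n,r,{\bf v}}(t)$ on the nose. Whenever this is positive there is a nonzero form of degree $t$ in $I$, which gives $\alpha(I)\leq t$.

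I do not expect any substantial obstacle. All the arithmetic content sits inside Lemma \ref{lem:NumCond}, which has already been established; the remaining input is the standard fact that disjointness of the underlying supports implies comaximality of the ideal sheaves, and hence the Chinese Remainder decomposition of $R/I$ in high degree. The only point worth being careful about is keeping the two claims separate, so that the bound needed for the moreover statement is recognized as formal (coming from the injection out of an intersection) and does not rely on passing to degrees $t\gg0$.
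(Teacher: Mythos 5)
Your proposal is correct and follows essentially the same route as the paper: the paper's use of the decomposition $\calo_Z(t)=\oplus_i\calo_{m_iL_i}(t)$ for disjoint supports together with Serre vanishing is exactly the sheaf-theoretic Chinese Remainder isomorphism $(R/I)_t\cong\oplus_i(R/I_i)_t$ in high degrees that you invoke, with Lemma \ref{lem:NumCond} supplying each summand. Your treatment of the ``moreover'' clause via the injection $R/I\hookrightarrow\oplus_i R/I_i$ is the same formal linear-algebra bound the paper uses, and you are right that it needs neither disjointness nor $t\gg0$.
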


\proof
Let $Z$ be the scheme defined by $I$. We denote the sheafification of $I$ by $\cali$,
so we have
$$\dim_k (I_t)=h^0(\pr^n,\cali(t)).$$
We also have an exact sequence of sheaves
\begin{equation}\label{eq:ses}
0\to \cali(t)\to\calo_{\P^n}(t)\to \calo_{Z}(t)\to 0.
\end{equation}
Because the $L_i$ are disjoint, we have $\calo_{Z}(t)=\oplus_i\calo_{m_iL_i}(t)$,
where $m_iL_i$ is the scheme defined by $I(L_i)^{m_i}$.
If we denote the sheaf of ideals of $L_i$ by $\cali_i$,
we have
\begin{equation}\label{eq:ses2}
0\to \cali_i^{m_i}(t)\to\calo_{\P^n}(t)\to \calo_{m_iL_i}(t)\to 0.
\end{equation}
By Serre vanishing we know $h^1(\pr^n,\cali_i^{m_i}(t))=0$ for $t\gg0$, in which case
\eqref{eq:ses2} is exact on global sections, so
$$h^0(m_iL_i,\calo_{m_iL_i}(t))=h^0(\pr^n,\calo_{\P^n}(t))-h^0(\pr^n,\cali_i^{m_i}(t))=
\dim_k(k[\pr^n])_t-\dim_k(I(L_i)^{m_i})_t $$
and by Lemma \ref{lem:NumCond}, this last quantity
is equal to $c_{n,r,m_i,t}$ for $t\geq m_i\geq 1$.
Thus for $t\gg0$ we have
$$\dim (I_t) = h^0(\pr^n,\cali^{m_i}(t))=h^0(\pr^n,\calo_{\P^n}(t))-h^0(Z,\calo_{Z}(t))=
\binom{t+n}{n}-\sum_ic_{n,r,m_i,t}=P_{n,r,{\bf v}}(t).$$

The last statement is linear algebra
(when $t\geq m$, elements of $I_t$ are by Lemma \ref{lem:NumCond} solutions to
$\sum_ic_{n,r,m_i,t}$ homogeneous linear equations on the vector space
$(k[\pr^n])_t$ of forms of degree $t$, which has dimension $\binom{t+n}{n}$;
thus when $P_{n,r,{\bf v}}(t)=\binom{t+n}{n}-\sum_ic_{n,r,m_i,t}>0$,
we must have a nonzero element of $I_t$). Note that the assumption
   $t\geq m$ cannot be relaxed, as it might happen that
   $P_{n,r,{\bf v}}(t)>0$ for some $t\in(0,m)$ whereas obviously $I_t=0$ in this range.
Indeed,
take for example $s$ general lines with $t=1$, $n=3$ and $m\gg0$.
Then $P_{n,1,s,m}(1)>0$ (see display \eqref{eq:HPforLines}) but clearly $\alpha(I^{(m)})>1$.
\endproof

Our main focus will be on the case of symbolic powers of ideals $I=I(L_1\cup\cdots\cap L_s)$
of disjoint $r$-planes in $\pr^n$. In this case, taking ${\bf v}=(m_1,\ldots,m_s)$ with $m_i=m$
for all $i$, we have $P_{n,r,{\bf v}}=P_{n,r,s,m}$, and these are the Hilbert polynomial of $I^{(m)}$.

   In the view of the last statement in Lemma \ref{lem:HilbPolyn} it is natural
   to introduce the following quantity
\begin{equation}
   \enrs:=\inf\left\{\frac{t}{m}:\, t\geq m\geq 1,\, P_{n,r,s,m}(t)>0 \right\},
\end{equation}
   which we call the \emph{expected Waldschmidt constant} of $I$.

Substitute $t=m\tau$ into $P_{n,r,s,m}(t)$ and regard $P_{n,r,s,m}(m\tau)$ as a
polynomial in $m$, whose degree is also $n$. We denote its
leading term by $\Lambda_{n,r,s}(\tau)$ and regard it in turn as a polynomial in $\tau$, again of degree $n$.
The reason $\Lambda_{n,r,s}(\tau)$ is of interest is because of the next theorem and the following
two fundamental conjectures.

\begin{theorem}\label{thm:zeroes of Lambda}
Let $n,r,s$ be integers with $n\geq 2r+1$, $r\geq0$ and $s\geq1$.
Let $I$ be the ideal of $s$ disjoint $r$-planes in $\pr^n$.
Then the polynomial $\Lambda_{n,r,s}(\tau)$ has a single real root bigger than or equal to 1.
Moreover, if we denote this largest real root by $g_{n,r,s}$, then
$$\gamma(I)\leq \enrs \leq g_{n,r,s}.$$
\end{theorem}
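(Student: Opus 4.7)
My plan is to derive a compact analytic formula for $\Lambda_{n,r,s}$, use it to control the real roots, and then deduce the chain of inequalities.

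First, I would establish the integral representation
$$\Lambda_{n,r,s}(\tau) = \frac{\tau^n}{n!} - \frac{s}{r!(n-r-1)!}\int_0^1 (\tau-u)^r u^{n-r-1}\, du$$
by substituting $t=m\tau$ into $P_{n,r,s,m}(t)=\binom{t+n}{n}-s\,c_{n,r,m,t}$, recognizing the sum $c_{n,r,m,m\tau}=\sum_{i=0}^{m-1}\binom{m\tau-i+r}{r}\binom{i+n-r-1}{n-r-1}$ as (up to lower-order terms in $m$) a Riemann sum, and collecting the coefficient of $m^n$; the binomial identities of Appendix~A should make this rigorous. The integral on the right is a polynomial of degree $r$ in $\tau$, so $\Lambda_{n,r,s}$ is a polynomial of degree $n$ with positive leading coefficient $1/n!$.

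From this formula one reads off that $\Lambda_{n,r,s}(\tau)\to+\infty$ as $\tau\to\infty$, while the beta integral $\int_0^1 (1-u)^r u^{n-r-1}\,du = r!(n-r-1)!/n!$ yields $\Lambda_{n,r,s}(1)=(1-s)/n!\leq 0$; the intermediate value theorem then produces at least one real root in $[1,\infty)$. For uniqueness I would write $\Lambda_{n,r,s}(\tau) = \tau^n/n! - s\,A(\tau)$ and show that $\tau^n/A(\tau)$ is strictly increasing on $(1,\infty)$, equivalently that $n\,A(\tau)>\tau\,A'(\tau)$. An integration by parts producing the identity $(n-r)\int_0^1 (\tau-u)^r u^{n-r-1}\,du = (\tau-1)^r + r\int_0^1 (\tau-u)^{r-1} u^{n-r}\,du$ reduces the desired inequality to $(\tau-1)^r>0$ for $\tau>1$. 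This monotonicity is the technical heart of the statement and the step I expect to be the main obstacle; in particular it forces the existing root in $[1,\infty)$ to be unique and equal to the largest real root $g_{n,r,s}$.

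The inequality $\gamma(I)\le e_{n,r,s}$ then comes straight from Lemma \ref{lem:HilbPolyn}: whenever $t\ge m\ge 1$ and $P_{n,r,s,m}(t)>0$, we have $\alpha(I^{(m)})\le t$, so $\gamma(I)\le \alpha(I^{(m)})/m\le t/m$, and taking the infimum over such pairs gives the bound. Finally, to prove $e_{n,r,s}\le g_{n,r,s}$, fix any $\tau>g_{n,r,s}$; by the sign analysis of Step~2, $\Lambda_{n,r,s}(\tau)>0$, so $P_{n,r,s,m}(m\tau)=m^n\Lambda_{n,r,s}(\tau)+O(m^{n-1})>0$ for $m$ large. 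Setting $t=\lceil m\tau\rceil\ge m$, a further $O(m^{n-1})$ perturbation keeps $P_{n,r,s,m}(t)>0$, while $t/m\to\tau$; this shows $e_{n,r,s}\le\tau$, and letting $\tau\searrow g_{n,r,s}$ gives the bound.
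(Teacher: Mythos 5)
Your argument is correct, and the two inequalities $\gamma(I)\leq \enrs\leq g_{n,r,s}$ are handled exactly as in the paper (Lemma \ref{lem:inequality c}, resting on Lemma \ref{lem:HilbPolyn} and the positivity of $\Lambda_{n,r,s}$ beyond its largest root); your only variation there is taking $t=\lceil m\tau\rceil$ rather than restricting to rational $\tau=t/m$, which is harmless since the perturbation is $O(m^{n-1})$. Where you genuinely diverge is the root analysis. The paper (Lemma \ref{lem:zeroes of Lambda}) works from the closed form $\Lambda_{n,r,s}(\tau)=\frac{1}{n!}\bigl(\tau^n-s\sum_{j=0}^r\binom{n}{j}(\tau-1)^j\bigr)$ of Proposition \ref{lem:Lambdan0s} and the differentiation tower $\Lambda_{n-1,r-1,s}=\frac{d}{d\tau}\Lambda_{n,r,s}$ of Corollary \ref{cor:diff eq tower}, propagating sign and monotonicity information inductively from $\Lambda_{n-r,0,s}(\tau)=(\tau^{n-r}-s)/(n-r)!$ up through $r$ antiderivatives; this yields the extra fact $g_{n-r,0,s}<g_{n-r+1,1,s}<\dots<g_{n,r,s}$, which the paper uses elsewhere. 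You instead write $\Lambda_{n,r,s}(\tau)=\tau^n/n!-sA(\tau)$ with the beta-type representation $A(\tau)=\frac{1}{r!(n-r-1)!}\int_0^1(\tau-u)^ru^{n-r-1}\,du$ (which is indeed equivalent to the paper's formula: both satisfy the same differentiation recursion, agree for $r=0$, and agree at $\tau=1$) and prove in one stroke that $\tau^n/A(\tau)$ is strictly increasing on $(1,\infty)$, the integration by parts reducing $nA-\tau A'>0$ exactly to $(\tau-1)^r>0$. This is a clean, non-inductive uniqueness argument and it delivers the needed sign statement $\Lambda_{n,r,s}>0$ for $\tau>g_{n,r,s}$ directly; its cost is that the Riemann-sum extraction of the leading coefficient needs a little care (uniformity of the error near $i=m$ when $\tau$ is close to $1$), though that can be sidestepped by verifying your integral formula against the recursion and base case instead, and it does not by itself produce the chain of inequalities among the $g_{n-r+i,i,s}$ that the paper's induction gives for free.
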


The proof is immediate from Lemmas \ref{lem:zeroes of Lambda} and \ref{lem:inequality c}.

\begin{remark}\label{Nagrem}
The bounds given by Theorem \ref{thm:zeroes of Lambda} are most interesting when
$I$ is the ideal of $s$ generic $r$-planes in $\pr^n$, since $\gamma(I)$ will be larger
the more general the $r$-planes are.
Even in the generic case both upper bounds on $\gamma(I)$ can be useful, since $g_{n,r,s}$
is easier to compute than $\enrs$
but is also sometimes strictly larger, while $\enrs$
in turn is often easier to compute than $\gamma(I)$.

The case $n=2$ and $r=0$ is rather special, since for $s<10$,
$\gamma(I)=\enrs$
with $\enrs < g_{n,r,s}$
unless $s=1,2,4$ or 9, in which case $\gamma(I)=g_{n,r,s}$. For $s\geq 10$, a famous
conjecture of Nagata \cite{refN} asserts that $\gamma(I)=\sqrt{s}$, and in fact
$\sqrt{s}=g_{2,0,s}$.
Nagata's conjecture has been extended to $n>2$ (but still $r=0$) by
Iarrobino \cite{refI}. Iarrobino's conjecture asserts for each $n>2$ and for $s\gg0$
generic points that $\gamma(I)=\sqrt[n]{s}$, and again it turns out that
$\sqrt[n]{s}=g_{n,0,s}$.

However, unlike the case $n=2$, for $n>2$
there are values of $s$ for which both inequalities of Theorem \ref{thm:zeroes of Lambda}
can be strict. For example, for $s=4$ generic points of $\pr^3$ (so $n=3$, $r=0$ and $s=4$),
we have $\gamma(I)=4/3$ by Proposition \ref{lem:cremona},
$e_{3,0,4}=3/2$ by Example \ref{infptsinP3},
and $g_{3,0,4}=\sqrt[3]{4}\approx1.587$.
Likewise, for $n=3$, $r=1$ and $s=6$ generic lines in $\pr^3$, we have
$\gamma(I)\leq42/11\approx3.8181$ by Remark \ref{6genericlines},
$e_{3,1,6}=27/7\approx3.8571$
by Example \ref{inflinesinP3},
while $g_{3,0,4}$ is the largest real root of $\tau^3-18\tau+12=0$, which is approximately 3.8587.
However, for $r=0,1$ and $s\gg0$ and all $n\geq 2r+1$, the second inequality is in fact
an equality. It seems reasonable to expect that this holds for all $r$; this is our Conjecture {\conjB} below.
The Nagata/Iarrobino conjecture, together with our Proposition \ref{prop:lines in pn} showing
$\gamma(I)=g_{n,1,s}=n-1$ for $s=(n-1)^{n-2}$ generic lines in $\pr^n$,
motivates Conjecture {\conjC}. Of course, it is a priori possible that
the first inequality holds for $s\gg0$ generic $r$-planes, even if the second does not.
This motivates our Conjecture {\conjA}.
\end{remark}

\begin{varthm*}{Conjecture A}
Let $n>0$ and $r\geq0$ be integers with $n\geq 2r+1$. Let $J_{n,r,s}$ be the ideal
of $s\gg 0$ generic $r$-planes in $\P^n$. Then
$$\gamma(J_{n,r,s})=\enrs.$$
\end{varthm*}

\begin{varthm*}{Conjecture B}
If $n\geq2r+1$, $r\geq1$ and $s\gg0$ are integers, then
$$\enrs=g_{n,r,s}.$$
\end{varthm*}

By Theorem \ref{thm:zeroes of Lambda}(c) we see that
Conjectures {\conjA} and {\conjB} are equivalent to the following conjecture,
which if true gives a numerical approach for computing $\gamma(J_{n,r,s})$.
\begin{varthm*}{Conjecture C}
With the same hypotheses as in Conjecture {\conjA}, we have
$$\gamma(J_{n,r,s})=g_{n,r,s}.$$
\end{varthm*}

\begin{remark}\label{rem:conjs}
It is elementary to check that Conjecture {\conjC} holds for $n=1$
(in which case we have $r=0$, since $n\geq 2r+1$).
As noted above, for $n=2$ and $r=0$, Conjecture {\conjC} is a less explicit
version of the famous Nagata Conjecture \cite{refN}
(Nagata stipulates $s\geq 9$, rather than merely $s\gg0$).
For $n\geq3$ and $r=0$, Conjecture {\conjC} is again a less explicit
version of Iarrobino's generalization of Nagata's conjecture \cite{refI}.
Nagata showed that his conjecture holds if the number of points is a perfect square \cite{refN},
and Evain has shown that Iarrobino's generalized conjecture holds if $s$ is an $n$th power \cite{Evain}.
When $n=3$ and $r=1$, Conjecture {\conjC} is Conjecture 5.5 of \cite{refGHVT}.
\end{remark}

In order to prove Theorem \ref{thm:zeroes of Lambda}, we will need some preliminary results.

\begin{lemma}\label{lem:inequality c}
Given integers $n>0$, $r\geq0$ and $s\geq1$ with $n\geq 2r+1$ and disjoint $r$-dimensional
linear subspaces $L_1,\ldots,L_s\subset \pr^n$ whose union has ideal $I$,
we have
$$\gamma(I)\leq \enrs.$$
Moreover, if $\Lambda_{n,r,s}(\tau)>0$ for all $\tau>g_{n,r,s}$ and if $g_{n,r,s}\geq 1$, then
$$\enrs\leq g_{n,r,s}.$$
\end{lemma}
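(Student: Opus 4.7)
The plan for the first inequality is essentially formal. By Lemma \ref{lem:HilbPolyn}, any pair $(t,m)$ with $t\geq m\geq 1$ and $P_{n,r,s,m}(t)>0$ satisfies $\alpha(I^{(m)})\leq t$. Since $\gamma(I)\leq\alpha(I^{(m)})/m$ for every $m$ by \eqref{eq:WalConst}, we obtain $\gamma(I)\leq t/m$ for every such admissible pair, and taking the infimum over all admissible pairs yields $\gamma(I)\leq\enrs$.

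For the second inequality I would produce admissible pairs $(t,m)$ whose ratios $t/m$ approach $g_{n,r,s}$ from above. Fix a real number $\tau>g_{n,r,s}$; by hypothesis $\Lambda_{n,r,s}(\tau)>0$. By construction, $\Lambda_{n,r,s}(\tau')$ is the leading coefficient (in $m$) of the polynomial $P_{n,r,s,m}(m\tau')$, so one has an expansion
\[
P_{n,r,s,m}(m\tau')=\Lambda_{n,r,s}(\tau')\,m^n+\sum_{j=0}^{n-1}a_j(\tau')\,m^j
\]
in which each coefficient $a_j(\tau')$ is itself a polynomial in $\tau'$. Now set $t_m=\lceil m\tau\rceil$ and $\tau_m=t_m/m$, so that $\tau\leq\tau_m\leq\tau+1/m$ and $\tau_m\to\tau$ as $m\to\infty$. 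Dividing the expansion at $\tau'=\tau_m$ by $m^n$ and passing to the limit gives $P_{n,r,s,m}(t_m)/m^n\to\Lambda_{n,r,s}(\tau)>0$, so $P_{n,r,s,m}(t_m)>0$ for $m$ large, while $\tau\geq g_{n,r,s}\geq 1$ forces $t_m\geq m\tau\geq m$. Thus $(t_m,m)$ is admissible, giving $\enrs\leq t_m/m\leq\tau+1/m$. Letting $m\to\infty$ yields $\enrs\leq\tau$, and then letting $\tau\downarrow g_{n,r,s}$ concludes that $\enrs\leq g_{n,r,s}$.

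I do not anticipate a genuine obstacle: the only delicate step is that rounding $m\tau$ up to the integer $\lceil m\tau\rceil$ must not destroy positivity of $P_{n,r,s,m}$, but this is transparent from the expansion since the rounding perturbs the dominant term by $O(m^{n-1})$, which is dwarfed by the strictly positive $\Lambda_{n,r,s}(\tau)\,m^n$. The substantive content of the hypotheses—namely $g_{n,r,s}\geq 1$ and the sign behavior of $\Lambda_{n,r,s}$ to the right of $g_{n,r,s}$—is simply imposed here and is established separately in Lemma \ref{lem:zeroes of Lambda}.
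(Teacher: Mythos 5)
Your proposal is correct and follows essentially the same route as the paper: the first inequality is the same formal consequence of Lemma \ref{lem:HilbPolyn} and the definition of $\gamma$, and the second is the same argument that positivity of the leading coefficient $\Lambda_{n,r,s}(\tau)$ forces $P_{n,r,s,m}(m\tau)>0$ for large $m$, producing admissible pairs with ratio arbitrarily close to $g_{n,r,s}$. The only cosmetic difference is that you handle integrality of $t$ by taking $t_m=\lceil m\tau\rceil$ for real $\tau$, whereas the paper fixes a rational $\tau'>g_{n,r,s}$ and takes $t=m\tau'$ along suitable $m$; both are fine.
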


\begin{proof}
By Lemma \ref{lem:HilbPolyn}, $\alpha(I^{(m)})/m\leq t/m$ if $P_{n,r,s,m}(t)>0$ for $t\geq m\geq 1$.
Thus $\gamma(I)\leq \inf\left\{\frac{t}{m}:\, t\geq m\geq 1,\, P_{n,r,s,m}(t)>0 \right\}$
now follows from the definition of the Waldschmidt constant.

For the other inequality,
let $\tau'$ be any fixed rational number bigger than $g_{n,r,s}$.
By assumption, $\Lambda_{n,r,s}(\tau')>0$,
so, for $t=m\tau'$, the leading term of the Hilbert polynomial
$P_{n,r,s,m}(t)=P_{n,r,s,m}(\tau'm)$ (viewed as a polynomial in $m$)
is positive.
Hence the polynomial itself is also positive for large $m$.
Since we are assuming $g_{n,r,s}\geq 1$, we see that
$\tau'$ is in the set $\left\{\frac{t}{m}:\, t\geq m\geq 1,\, P_{n,r,s,m}(t)>0 \right\}$.
This works for all $\tau'>g_{n,r,s}$, hence
$$\enrs=\inf\left\{\frac{t}{m}:\, t\geq m\geq 1,\, P_{n,r,s,m}(t)>0 \right\}\leq g_{n,r,s}.$$
\end{proof}

Expressions for the Hilbert polynomial $P_{n,r,s,m}(t)$
of the union of $s$ disjoint $r$-planes of multiplicity $m$ in $\pr^n$
are complicated (even when $m=1$; see for example
\cite[Corollary 3.4]{refDer}). Thus even though
$P_{n,r,s,1}(t)=\binom{t+n}{n}-sc_{n,r,m,t}$,
together with Lemma \ref{lem:NumCond}, in principle gives an explicit
formula for $P_{n,r,s,1}(t)$, it would not be easy to
extract $\Lambda_{n,r,s}(\tau)$ from $P_{n,r,s,m}(t)$ directly.
It is, nevertheless, possible to give an explicit expression for $\Lambda_{n,r,s}(\tau)$ in all cases.
We do this in Proposition \ref{lem:Lambdan0s}, as an application of
our next two results. We will need the following notation.
Given any function $f(t)$, we define the \emph{first difference} $\Delta f$ as
$\Delta f(t)=f(t)-f(t-1)$.

\begin{lemma}\label{lem:Delta}
For $n\geq 2r+1$ and $r>0$, we have
$$\Delta P_{n,r,s,m}(t)=P_{n-1,r-1,s,m}(t).$$
\end{lemma}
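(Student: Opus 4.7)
The plan is to compute $\Delta P_{n,r,s,m}(t)$ directly from the definition $P_{n,r,s,m}(t) = \binom{t+n}{n} - s\cdot c_{n,r,m,t}$ and check that the result matches the explicit formula for $P_{n-1,r-1,s,m}(t)$. Since $\Delta$ is linear, it suffices to handle the two summands separately.

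First, I would apply the classical identity $\Delta\binom{t+n}{n} = \binom{t+n}{n} - \binom{t+n-1}{n} = \binom{t+n-1}{n-1}$, which is exactly the leading ``binomial'' term of $P_{n-1,r-1,s,m}(t) = \binom{t+n-1}{n-1} - s\cdot c_{n-1,r-1,m,t}$. So the main task reduces to showing that $\Delta c_{n,r,m,t} = c_{n-1,r-1,m,t}$.

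Next, using the formula from Lemma \ref{lem:NumCond},
$$c_{n,r,m,t} = \sum_{0\leq i<m}\binom{t-i+r}{r}\binom{i+n-r-1}{n-r-1},$$
I would pull $\Delta$ inside the sum (it acts only on the variable $t$) and apply the same Pascal identity to each summand, noting that since $r>0$ we have $\Delta\binom{t-i+r}{r} = \binom{t-i+r-1}{r-1}$. This gives
$$\Delta c_{n,r,m,t} = \sum_{0\leq i<m}\binom{t-i+r-1}{r-1}\binom{i+n-r-1}{n-r-1}.$$
Comparing with the definition of $c_{n-1,r-1,m,t}$, one sees that replacing $(n,r)$ by $(n-1,r-1)$ in the formula of Lemma \ref{lem:NumCond} leaves the second binomial factor $\binom{i+n-r-1}{n-r-1}$ unchanged while turning the first one into $\binom{t-i+r-1}{r-1}$, exactly matching the right-hand side above. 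Hence $\Delta c_{n,r,m,t} = c_{n-1,r-1,m,t}$.

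Combining the two computations yields
$$\Delta P_{n,r,s,m}(t) = \binom{t+n-1}{n-1} - s\cdot c_{n-1,r-1,m,t} = P_{n-1,r-1,s,m}(t),$$
as desired. The hypotheses $r>0$ and $n\geq 2r+1$ are used only to guarantee that the identity $\Delta\binom{t-i+r}{r}=\binom{t-i+r-1}{r-1}$ is the ``honest'' Pascal identity and that the inequality $n-1\geq 2(r-1)+1$ keeps us in the admissible parameter range for the right-hand side. There is no real obstacle here: the proof is essentially bookkeeping with the Pascal identity, and the only thing to be careful about is that $\Delta$ commutes with the finite sum over $i$, which is immediate since the index set does not involve $t$.
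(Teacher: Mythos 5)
Your proof is correct, but it takes a genuinely different route from the paper. You work purely combinatorially with the defining formula $P_{n,r,s,m}(t)=\binom{t+n}{n}-s\,c_{n,r,m,t}$ and reduce everything to Pascal's identity, which holds as a polynomial identity in $t$; the key observation that passing from $(n,r)$ to $(n-1,r-1)$ leaves the second binomial factor $\binom{i+n-r-1}{n-r-1}$ in $c_{n,r,m,t}$ untouched is exactly right, and your caveat that $r>0$ is what makes $\Delta\binom{t-i+r}{r}=\binom{t-i+r-1}{r-1}$ work is the correct place to use that hypothesis. The paper instead argues geometrically: it realizes $P_{n,r,s,m}$ as the Hilbert polynomial of $I_{n,r,s}^{(m)}$, cuts the union of fat flats with a general hyperplane $H$, identifies the saturation of $\bigl(I_{n,r,s}^{(m)}+I(H)\bigr)/I(H)$ with $I_{n-1,r-1,s}^{(m)}$ in $k[H]$, and deduces $\dim(I_{n-1,r-1,s}^{(m)})_t=\dim(I_{n,r,s}^{(m)})_t-\dim(I_{n,r,s}^{(m)})_{t-1}$ for $t\gg0$, which forces the polynomial identity. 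Your argument is more elementary and self-contained (it does not even use the existence of $s$ disjoint $r$-planes, so the hypothesis $n\geq 2r+1$ is not really needed for it), while the paper's argument explains the geometric content of the difference operator --- hyperplane sections --- which is the viewpoint that feeds into Corollary \ref{cor:diff eq tower} and the differential-equation tower. Both are complete proofs.
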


\begin{proof}
Since $n\geq 2r+1$, we can choose $s$ disjoint linear spaces $L_1,\ldots, L_s\subset \pr^n$,
each of dimension $r$. Let $I_{n,r,s}=I(L_1\cup\cdots\cup L_s)\subset k[\pr^n]$ be the ideal of their union.
Take a general hyperplane $H$. We then
have the hyperplane section $H\cap(mL_1\cup\cdots\cup mL_s)$
of $mL_1\cup\cdots\cup mL_s$.
The linear subspaces $L_i\cap H$ are again disjoint in $H\simeq\P^{n-1}$.
Let $I_{n-1,r-1,s}$ be the ideal of their union.
The fact that the $L_i$ are disjoint means that the ideal
$I_{n-1,r-1,s}^{(m)}$
is the saturation of $\left(I_{n,r,s}^{(m)}+I(H)\right)/I(H)$ in the ring $k[H]$.
Hence for $t\gg 0$
$$\left(I_{n-1,r-1,s}^{(m)}\right)_t=\left(\left(I_{n,r,s}^{(m)}+I(H)\right)/I(H)\right)_t.$$
But
$$\left(I_{n,r,s}^{(m)}+I(H)\right)/I(H)\cong I_{n,r,s}^{(m)}/\left(I_{n,r,s}^{(m)}\cap I(H)\right)
\cong I_{n,r,s}^{(m)}/\left(F\cdot I_{n,r,s}^{(m)}\right),$$
where $F$ is the linear form defining $H$. Thus
$$\dim (I_{n-1,r-1,s}^{(m)})_t=
\dim \left(I_{n,r,s}^{(m)}/F\cdot I_{n,r,s}^{(m)}\right)_t=
\dim (I_{n,r,s}^{(m)})_t-\dim(I_{n,r,s}^{(m)})_{t-1},$$
which for
$t\gg0$ gives $P_{n-1,r-1,s,m}(t)=\Delta P_{n,r,s,m}(t)$.
\end{proof}

Lemma \ref{lem:Delta} implies that the polynomials $\Lambda_{n,r,s}$ are linked
by a series of linear differential equations.

\begin{corollary}\label{cor:diff eq tower}
For $n\geq 2r+1$ and $r>0$, we have $\Lambda_{n,r,s}(1)=(1-s)/n!$ and
$$\Lambda_{n-1,r-1,s}(\tau)=\frac{d\ \Lambda_{n,r,s}(\tau)}{d\tau}.$$
\end{corollary}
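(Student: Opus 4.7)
Both statements follow from unpacking the definition: $\Lambda_{n,r,s}(\tau)$ is the coefficient of $m^n$ in $P_{n,r,s,m}(m\tau)$ when the latter is expanded as a polynomial in $m$ with $\tau$ as parameter. The first assertion will be a direct computation at $\tau=1$, while the differential equation will be extracted from Lemma \ref{lem:Delta} by comparing leading coefficients after a finite Taylor expansion.

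\textbf{Initial value.} At $\tau=1$, Lemma \ref{lem:NumCond} supplies the closed form $c_{n,r,m,m}=\binom{m+n}{n}-\binom{m+n-r-1}{n-r-1}$, so
\[
P_{n,r,s,m}(m)=(1-s)\binom{m+n}{n}+s\binom{m+n-r-1}{n-r-1}.
\]
Viewed as a polynomial in $m$, the first summand has degree $n$ with leading coefficient $(1-s)/n!$, while the second summand has degree $n-r-1$, which is strictly less than $n$ because $r\geq 1$. Reading off the $m^n$ coefficient gives $\Lambda_{n,r,s}(1)=(1-s)/n!$.

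\textbf{Recursion.} Starting from Lemma \ref{lem:Delta}, $P_{n,r,s,m}(t)-P_{n,r,s,m}(t-1)=P_{n-1,r-1,s,m}(t)$, I would substitute $t=m\tau$ and view both sides as polynomials in $m$ with $\tau$ a parameter. Writing
\[
P_{n,r,s,m}(m\tau)=\Lambda_{n,r,s}(\tau)\,m^n+\sum_{j<n}A_j(\tau)\,m^j,
\]
the chain rule gives $\partial_t P_{n,r,s,m}(t)|_{t=m\tau}=\tfrac{1}{m}\tfrac{d}{d\tau}P_{n,r,s,m}(m\tau)=\Lambda_{n,r,s}'(\tau)\,m^{n-1}+O(m^{n-2})$. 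Since $P_{n,r,s,m}$ is polynomial in $t$, a finite Taylor expansion gives
\[
P_{n,r,s,m}(m\tau)-P_{n,r,s,m}(m\tau-1)=\partial_t P_{n,r,s,m}(m\tau)-\tfrac{1}{2}\partial_t^2 P_{n,r,s,m}(m\tau)+\cdots,
\]
and since $\partial_t^k P_{n,r,s,m}(m\tau)$ has $m$-degree at most $n-k$, the leading $m^{n-1}$ coefficient of the left-hand side is exactly $\Lambda_{n,r,s}'(\tau)$. Matching this to the leading $m^{n-1}$ coefficient $\Lambda_{n-1,r-1,s}(\tau)$ of $P_{n-1,r-1,s,m}(m\tau)$ yields the differential equation.

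\textbf{Main obstacle.} The one step requiring care is the degree bookkeeping when translating between $\partial_t$, which appears in Taylor's formula, and $d/d\tau$, which appears in the statement: the chain-rule factor of $m$ is precisely what shifts the top $m$-degree from $n$ down to $n-1$ and turns the leading-coefficient comparison into a derivative of $\Lambda_{n,r,s}$ rather than $\Lambda_{n,r,s}$ itself. Once this is set up cleanly, the extraction of coefficients is mechanical, and no information beyond Lemmas \ref{lem:NumCond} and \ref{lem:Delta} is needed.
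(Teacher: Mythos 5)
Your proof is correct and follows essentially the same route as the paper: both parts rest on Lemma \ref{lem:Delta} and the closed form for $c_{n,r,m,m}$ from Lemma \ref{lem:NumCond}, with the differential equation obtained by extracting the $m^{n-1}$ coefficient of $P_{n,r,s,m}(m\tau)-P_{n,r,s,m}(m\tau-1)$. The only difference is cosmetic: the paper identifies that coefficient as $\frac{d\,\Lambda_{n,r,s}(\tau)}{d\tau}$ via a limit of a difference quotient as $m\to\infty$, whereas your finite Taylor expansion together with the degree bound on $\partial_t^k P_{n,r,s,m}(m\tau)$ makes the same point slightly more explicitly by showing that no lower-order term can contribute to the $m^{n-1}$ coefficient.
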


\begin{proof}
Since $P_{n-1,r-1,s,m}(m\tau)=P_{n,r,s,m}(m\tau)-P_{n,r,s,m}(m\tau-1)$
has degree $m^{n-1}$ in $m$, the leading coefficient $\Lambda_{n-1,r-1,s}(\tau)$ of
$P_{n-1,r-1,s,m}(m\tau)$ can be obtained by taking a limit
\[
\begin{split}
\Lambda_{n-1,r-1,s}(\tau)&=\lim_{m\to\infty}\frac{P_{n-1,r-1,s,m}(m\tau)}{m^{n-1}}=
\lim_{m\to\infty}\frac{P_{n,r,s,m}(m\tau)-P_{n,r,s,m}(m\tau-1)}{m^{n-1}}\\
&=\lim_{m\to\infty}\frac{m^n(\Lambda_{n,r,s}(\tau)-\Lambda_{n,r,s}(\tau-(1/m)))}{m^{n-1}}
=\frac{d\ \Lambda_{n,r,s}(\tau)}{d\tau}.
\end{split}
\]

Applying the formula for $c_{n,r,m,m}$ in Lemma \ref{lem:NumCond} we see that
$$P_{n,r,s,m}(m)=\binom{m+n}{n}-sc_{n,r,m,m}=
\binom{m+n}{n}-s\Big(\binom{m+n}{n}-\binom{m+n-r-1}{n-r-1}\Big),$$
hence $\Lambda_{n,r,s}(1)=\frac{1-s}{n!}$.
\end{proof}

\begin{proposition}\label{lem:Lambdan0s}
Let $n\geq 2r+1$, $r\geq0$ and $s\geq 1$. Then
$$\Lambda_{n,r,s}(\tau)=\frac{1}{n!}\Big(\tau^n-s\Big(\sum_{j=0}^r\binom{n}{j}(\tau-1)^j\Big)\Big).$$
In particular,
$$\Lambda_{n,0,s}(\tau)=(\tau^n-s)/n!\;\mbox{ and }\; g_{n,0,s}=\sqrt[n]{s}.$$
\end{proposition}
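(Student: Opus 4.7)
The proof will go by induction on $r$, using Corollary \ref{cor:diff eq tower} for the inductive step and Lemma \ref{lem:NumCond} for the base case. The structure is cleanly set up by the preceding results: the corollary provides both a differential recursion in $(n,r)$ and a pinning value at $\tau=1$, which together determine $\Lambda_{n,r,s}$ uniquely from $\Lambda_{n-1,r-1,s}$.

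For the base case $r=0$, Lemma \ref{lem:NumCond} gives
$$c_{n,0,m,t} \;=\; \sum_{i=0}^{m-1}\binom{i+n-1}{n-1} \;=\; \binom{m+n-1}{n}$$
by the hockey-stick identity (note this is independent of $t$). Hence $P_{n,0,s,m}(t) = \binom{t+n}{n} - s\binom{m+n-1}{n}$, and substituting $t = m\tau$ and reading off the $m^n$-coefficient yields $\Lambda_{n,0,s}(\tau) = (\tau^n - s)/n!$. This matches the claimed general formula, since for $r=0$ the sum $\sum_{j=0}^{r}\binom{n}{j}(\tau-1)^j$ collapses to $\binom{n}{0}=1$. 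Because $\tau^n - s$ is strictly increasing for $\tau>0$, its largest (and only positive) real root is $g_{n,0,s} = \sqrt[n]{s}$.

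For the inductive step, fix $r\geq 1$ and assume the formula is known for $(n-1,r-1,s)$. Let
$$\widetilde{\Lambda}(\tau) \;:=\; \frac{1}{n!}\Big(\tau^n - s\sum_{j=0}^r \binom{n}{j}(\tau-1)^j\Big)$$
be the candidate expression. Differentiating term by term and applying the Pascal-type identity $j\binom{n}{j} = n\binom{n-1}{j-1}$, then reindexing $j \mapsto j-1$, yields
$$\frac{d\widetilde{\Lambda}}{d\tau}(\tau) \;=\; \frac{1}{(n-1)!}\Big(\tau^{n-1} - s\sum_{k=0}^{r-1}\binom{n-1}{k}(\tau-1)^k\Big) \;=\; \Lambda_{n-1,r-1,s}(\tau)$$
by the inductive hypothesis. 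Evaluating at $\tau=1$ gives $\widetilde{\Lambda}(1) = (1-s)/n!$, since only the $j=0$ term survives. On the other hand, Corollary \ref{cor:diff eq tower} guarantees that $\Lambda_{n,r,s}$ satisfies both $\Lambda_{n,r,s}(1) = (1-s)/n!$ and $\frac{d\Lambda_{n,r,s}}{d\tau} = \Lambda_{n-1,r-1,s}$. Two polynomials with the same derivative that agree at one point must be equal, so $\Lambda_{n,r,s} = \widetilde{\Lambda}$, completing the induction.

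I do not anticipate a genuine obstacle here; the substantive content has already been absorbed by Lemma \ref{lem:NumCond} and Corollary \ref{cor:diff eq tower}. The only items requiring care are the two combinatorial identities mentioned (hockey-stick in the base case, and $j\binom{n}{j} = n\binom{n-1}{j-1}$ for matching derivatives), together with the routine verification that the $j=0$ term of the candidate formula exactly produces the required value $(1-s)/n!$ at $\tau=1$.
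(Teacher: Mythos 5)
Your proof is correct and follows essentially the same route as the paper: compute $\Lambda_{n,0,s}$ directly from the Hilbert polynomial of fat points, then propagate to general $r$ via the differential recursion and the pinning value $\Lambda_{n,r,s}(1)=(1-s)/n!$ from Corollary \ref{cor:diff eq tower}. The paper phrases the second step as ``take $r$ antiderivatives,'' and your induction with the explicit verification that the candidate satisfies the same derivative and the same value at $\tau=1$ is just a more careful rendering of exactly that argument.
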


\proof
It is easy to see and well known that
the Hilbert polynomial of $s$ points of multiplicity $m$ in $\pr^n$ is $s\binom{m+n-1}{n}$, and hence the
Hilbert polynomial of the corresponding ideal $I_{n,0,s}$ is
$$P_{n,0,s,m}(t)=\binom{t+n}{n}-s\binom{m+n-1}{n}=\frac{1}{n!}\big((t+n)\cdots(t+1)-s(m+n-1)\cdots(m+1)m\big).$$
Substituting $m\tau=t$ gives (for appropriate values of the $a_i$ and $b_i$ for $1\leq i\leq n-1$, all of which are positive)
\begin{equation}\label{eq:Pn0sm}
\begin{split}
P_{n,0,s,m}(m\tau)=&\big((m\tau+n)\cdots(m\tau+1)-s(m+n-1)\cdots(m+1)m\big)/n!\\
                                =&\big(m^n(\tau^n-s)+m^{n-1}(a_{n-1}\tau^{n-1}-b_{n-1}s)+\cdots+m(a_1\tau-b_1s)+n!\big)/n!
\end{split}
\end{equation}
Thus indeed $\Lambda_{n,0,s}(\tau)=(\tau^n-s)/n!$ and $g_{n,0,s}=\sqrt[n]{s}$.
For the rest, start with $\Lambda_{n-r,0,s}(\tau)=(\tau^{n-r}-s)/(n-r)!$, and take
$r$ antiderivatives using Corollary \ref{cor:diff eq tower}, to obtain the result.
\endproof

\begin{lemma}\label{lem:zeroes of Lambda}
Let $n,r,s$ be integers with $n\geq 2r+1$, $r\geq0$ and $s\geq1$.
Let $I$ be the ideal of $s$ disjoint $r$-planes in $\pr^n$.
Then the polynomial $\Lambda_{n,r,s}(\tau)$ has a single real root bigger than or equal to 1.
Moreover, if we denote this largest real root by $g_{n,r,s}$, then:
\begin{itemize}
\item[(a)] $g_{n,r,s}>g_{n-1,r-1,s}$ for $r>0$ with $s>1$, while $g_{n,r,1}=1<g_{n,r,s}$ for $r\geq 0$ and $s>1$; and
\item[(b)] $\Lambda_{n,r,s}(\tau)<0$ for $1\leq \tau<g_{n,r,s}$ with
$\Lambda_{n,r,s}(\tau)>0$ for $\tau>g_{n,r,s}$.
\end{itemize}
\end{lemma}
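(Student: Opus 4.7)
The plan is to prove the lemma by induction on $r$, using the differential relation $\Lambda_{n,r,s}'(\tau) = \Lambda_{n-1,r-1,s}(\tau)$ together with $\Lambda_{n,r,s}(1) = (1-s)/n!$ from Corollary \ref{cor:diff eq tower}, and the closed form of Proposition \ref{lem:Lambdan0s}. The entire argument is then just a careful monotonicity analysis on $[1,\infty)$; nothing deep is needed, and the only subtlety is isolating the degenerate case $s=1$.

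For the base case $r=0$, Proposition \ref{lem:Lambdan0s} gives $\Lambda_{n,0,s}(\tau) = (\tau^n - s)/n!$. Its unique real root in $[1,\infty)$ is $g_{n,0,s} = \sqrt[n]{s}$; it is strictly negative on $[1,\sqrt[n]{s})$ and strictly positive on $(\sqrt[n]{s},\infty)$, which gives (b). Claim (a) is vacuous in its first half, and its second half amounts to $g_{n,0,1} = 1$ together with $\sqrt[n]{s} > 1$ for $s > 1$.

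For the inductive step, fix $r \geq 1$ with $n \geq 2r+1$ and assume the lemma at level $(n-1, r-1)$. If $s = 1$, the inductive (b) gives $\Lambda_{n-1,r-1,1}(\tau) > 0$ for every $\tau > 1$, so $\Lambda_{n,r,1}$ is strictly increasing on $[1,\infty)$ from the value $\Lambda_{n,r,1}(1) = 0$; thus $g_{n,r,1} = 1$ is the unique real root in $[1,\infty)$ and (b) holds trivially. If $s > 1$, the inductive hypothesis yields $g_{n-1,r-1,s} > 1$, and (b) at the previous level says $\Lambda_{n,r,s}'(\tau) < 0$ on $[1, g_{n-1,r-1,s})$ and $\Lambda_{n,r,s}'(\tau) > 0$ on $(g_{n-1,r-1,s}, \infty)$. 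Since $\Lambda_{n,r,s}(1) = (1-s)/n! < 0$ and the leading term of $\Lambda_{n,r,s}$ is $\tau^n/n! \to +\infty$, the function is strictly decreasing and therefore negative throughout $[1, g_{n-1,r-1,s}]$, and then strictly increases from a negative minimum to $+\infty$ on $[g_{n-1,r-1,s},\infty)$. It follows that $\Lambda_{n,r,s}$ has a unique real root $g_{n,r,s}$ in $[1,\infty)$, that this root lies strictly to the right of $g_{n-1,r-1,s}$ (giving (a)), and that the sign description in (b) holds.

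The substantive input is Corollary \ref{cor:diff eq tower}; given the derivative identity and the value at $\tau=1$, the argument is a routine induction. The only delicate point is the separate handling of $s=1$, where both $\Lambda_{n,r,s}(1)$ and the inductively supplied root $g_{n-1,r-1,s}$ collapse to $0$ and $1$ respectively, so the generic monotonicity picture degenerates. The main case $s>1$ is handled by a single-pass shape analysis: \emph{decreasing on $[1, g_{n-1,r-1,s}]$, increasing afterwards}, starting negative and tending to $+\infty$.
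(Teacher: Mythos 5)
Your proof is correct and follows essentially the same route as the paper: an induction up the tower of antiderivatives using $\Lambda_{n,r,s}'=\Lambda_{n-1,r-1,s}$ and $\Lambda_{n,r,s}(1)=(1-s)/n!$, with the case $s=1$ treated separately and the case $s>1$ handled by the ``decreasing on $[1,g_{n-1,r-1,s}]$, then strictly increasing to $+\infty$'' shape analysis. If anything, your write-up is slightly cleaner at the first step of the induction, where the paper loosely asserts that $\Lambda_{n-r,0,s}(\tau)>0$ for all $\tau>1$ (false for $s>1$ on $[1,\sqrt[n-r]{s})$) but then uses exactly the sign pattern you describe.
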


\begin{proof}
We first prove (a) and (b) for the case $s=1$.
By Proposition \ref{lem:Lambdan0s},
$\Lambda_{n,r,s}(\tau)=\frac{1}{n!}\Big(\tau^n-s\Big(\sum_{j=0}^r\binom{n}{j}(\tau-1)^j\Big)\Big)$,
so $\Lambda_{n,r,1}(1)=0$. Moreover, by Corollary \ref{cor:diff eq tower},
$d(\Lambda_{n-r+1,1,1}(\tau))/d\tau=\Lambda_{n-r,0,1}(\tau)=(\tau^n-1)/n!>0$
for $\tau>1$, so $\Lambda_{n-r+1,1,1}(\tau)$ is increasing for $\tau\geq 1$,
hence positive for $\tau>1$.
The same argument applied to $d(\Lambda_{n-r+2,2,1}(\tau))/d\tau=\Lambda_{n-r+1,1,1}(\tau)$
shows that $\Lambda_{n-r+2,2,1}(\tau)$ is increasing for $\tau\geq 1$ hence positive for $\tau>1$.
Continuing in this way we eventually obtain that
$\Lambda_{n,r,1}(\tau)$ is strictly increasing for $\tau\geq1$, hence positive for $\tau>1$.
Therefore $\tau=1$ is the largest real root, so
$g_{n,r,1}=1$ and now (b) follows when $s=1$.

Now assume $s>1$. Then it is easy to check that $\Lambda_{n-r+i,i,s}(1)<0$ for all $0\leq i\leq r$. But
as a polynomial in $\tau$, $\Lambda_{n-r+i,i,s}(\tau)$ has positive leading coefficient
so it has a root bigger than 1. But $\Lambda_{n-r,0,s}(\tau)=(\tau^n-s)/n!$
is positive for $\tau>1$, so $\Lambda_{n-r+1,1,s}(\tau)$, being an antiderivative,
is strictly increasing for $\tau>1$, so it has exactly one root bigger than 1, hence
$g_{n-r+1,1,s}>1$. In particular, $\Lambda_{n-r+1,1,s}(\tau)<0$ for $1\leq \tau<g_{n-r+1,1,s}$
and $\Lambda_{n-r+1,1,s}(\tau)>0$ for $\tau>g_{n-r+1,1,s}$.
Since $\Lambda_{n-r+1,1,s}(\tau)$ is an antiderivative for
$\Lambda_{n-r+2,2,s}(\tau)$, we now see that $\Lambda_{n-r+2,2,s}(\tau)$
is decreasing and hence negative for $1\leq \tau\leq g_{n-r+1,1,s}$.
Since $\Lambda_{n-r+2,2,s}(\tau)$ has positive leading coefficient, it has a root
bigger than $g_{n-r+1,1,s}$, hence $g_{n-r+2,2,s}>g_{n-r+1,1,s}$. Because
$\Lambda_{n-r+1,1,s}(\tau)>0$ for $\tau>g_{n-r+1,1,s}$, we see that
$\Lambda_{n-r+2,2,s}(\tau)$ is strictly increasing for $\tau\geq g_{n-r+1,1,s}$,
hence $\Lambda_{n-r+2,2,s}(\tau)$ has exactly one root bigger than $g_{n-r+1,1,s}$,
and we see $\Lambda_{n-r+2,2,s}(\tau)<0$ for $1\leq \tau<g_{n-r+2,2,s}$
and $\Lambda_{n-r+2,2,s}(\tau)>0$ for $\tau>g_{n-r+2,2,s}$.
The argument continues in this way for $\Lambda_{n-r+i,i,s}(\tau)$ for $i$ up to $r$,
which proves (a) and (b).
\end{proof}

\begin{remark}\label{rmk:nef cone}
The polynomials $\Lambda_{n,r,s}(\tau)$ can be found as intersections
of certain classes on the blow up of $\pr^n$. Specifically, let $X$ be the blow up of
$\pr^n$ along $s$ disjoint $r$-planes, $P_1,\ldots,P_s$.
Let $H$ be the hyperplane class pulled back to $X$, let $E_i$ be the exceptional locus
of $P_i$ and let $E=E_1+\cdots+E_s$.
Then $(\tau H-E)^n=n!\Lambda_{n,r,s}(\tau)$ (see Corollary \ref{cor:IntThry=Lambda}).
This implies that $m(\tau H-E)$ is big when $\tau>g_{n,r,s}$ is rational and $m$
is sufficiently large and sufficiently divisible.\footnote{To see this, let
$Z\subset \pr^n$ be the scheme theoretic union of the $P_i$, and let $mZ$ the scheme defined by
$I(Z)^{(m)}$. Then, as in the proof of Lemma \ref{lem:HilbPolyn} and using the fact that
$g_{n,r,s}\geq 1$ by Theorem \ref{thm:zeroes of Lambda}, we have
$h^0(X,\calo_X(m(\tau H-E)))=\dim I(mZ)_{m\tau}\geq P_{n,r,s,m}(m\tau)$, but
as a polynomial in $m$, $P_{n,r,s,m}(m\tau)$ has leading coefficient
$\Lambda_{n,r,s}(\tau)>0$, and thus $h^0(X,\calo_X(m(\tau H-E)))$ has order of growth
$m^n$.}
Note that $(\tau H-E)^n>0$ by itself has no immediate consequences for
the bigness of multiples of $\tau H-E$ when $n\geq 3$ and $r>0$.
\end{remark}

\section{Conjecture {\conjB} for $r\leq 1$}
In this section we assume $r\leq1$. We begin by verifying Conjecture {\conjB} for $r=0$.

\begin{proposition}
Conjecture {\conjB} holds for $r=0$ for all $n\geq 1$.
\end{proposition}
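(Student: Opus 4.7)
The plan is as follows. Theorem~\ref{thm:zeroes of Lambda} combined with Proposition~\ref{lem:Lambdan0s} already yields $e_{n,0,s}\leq g_{n,0,s}=s^{1/n}$, so I only need the reverse inequality $e_{n,0,s}\geq s^{1/n}$ for $s$ sufficiently large (depending on $n$). The case $n=1$ is immediate from $P_{1,0,s,m}(t)=t+1-sm$. Set $g=s^{1/n}$ and fix $n\geq 2$ below.

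By the contrapositive, it is enough to show that every pair of integers $t\geq m\geq 1$ with $t<mg$ satisfies $P_{n,0,s,m}(t)\leq 0$. Since $\binom{t+n}{n}$ is strictly increasing in $t$ on $[-1,\infty)$, this follows from the single polynomial evaluation $P_{n,0,s,m}(mg)\leq 0$ at the real point $mg$. Unwinding the formula $P_{n,0,s,m}(t)=\binom{t+n}{n}-s\binom{m+n-1}{n}$ and dividing through by $g^n=s$ rewrites that condition as
\begin{equation*}
\prod_{i=1}^n\Bigl(m+\frac{i}{g}\Bigr)\;\leq\;\prod_{i=0}^{n-1}(m+i),
\end{equation*}
and the task becomes to prove this for every integer $m\geq 1$ once $g$ is large.

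To establish the product inequality I would compare the factors index by index, matching $m+i/g$ on the left with $m+i-1$ on the right. For $i\geq 2$ and $g\geq 2$ one has $i/g\leq i-1$, so the $n-1$ pairs $i=2,\ldots,n$ each favour the right side. The lone exception is $i=1$, where $m+1/g$ exceeds $m$ by $1/g$, contributing a small log-discrepancy $\ln(1+1/(mg))$. Taking logarithms and applying the bounds $\ln(1+x)\leq x$ on the outlier and $\ln(1+x)\geq x/(1+x)$ on the helpful terms, the problem reduces (after clearing the common factor of $g$) to the elementary inequality
\begin{equation*}
\frac{1}{m}\;\leq\;\sum_{j=1}^{n-1}\frac{(g-1)j-1}{m+j},
\end{equation*}
which holds uniformly in $m\geq 2$ once $g$ exceeds $(n+1)/(n-1)$ by an explicit margin depending on $n$.

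The principal obstacle is the boundary case $m=1$, where the elementary inequality is tightest (for $n=2$ it forces $g\geq 4$, while for $n\geq 3$ it already holds at $g\geq 3$). I would handle it by direct evaluation: at $m=1$ the factorized inequality reduces to
\begin{equation*}
s\cdot n!\;\geq\;\prod_{i=1}^n\bigl(s^{1/n}+i\bigr)=s+\tfrac{n(n+1)}{2}s^{(n-1)/n}+O\bigl(s^{(n-2)/n}\bigr),
\end{equation*}
whose left side is linear in $s$ with coefficient $n!\geq 2$, while the right side is $s$ plus strictly lower-order terms; hence it holds with room to spare once $s\gg 0$. Combining the $m=1$ case with the uniform argument for $m\geq 2$ yields $P_{n,0,s,m}(mg)\leq 0$ for every $m\geq 1$ and every $s\gg 0$; by the reduction above this gives $e_{n,0,s}\geq g_{n,0,s}$, completing the proof.
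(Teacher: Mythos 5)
Your proof is correct, but it takes a noticeably different route from the paper's. The paper works directly with the expansion \eqref{eq:Pn0sm} of $P_{n,0,s,m}(m\tau)$ as a polynomial in $m$: each coefficient has the form $a_i\tau^i-b_is$ with $a_i,b_i>0$, and since $\tau^i\leq g_{n,0,s}^i=s^{i/n}=o(s)$ for $i<n$, every coefficient is negative for $0\leq\tau\leq g_{n,0,s}$ once $s\gg0$, whence $P_{n,0,s,m}(m\tau)\leq0$ on that range for all $m\geq1$ in one stroke. You instead reduce, via monotonicity of $\binom{t+n}{n}$ in $t$, to the single evaluation $P_{n,0,s,m}(ms^{1/n})\leq0$, rewrite it as the product inequality $\prod_{i=1}^n(m+i/g)\leq\prod_{i=0}^{n-1}(m+i)$, and prove that by a factor-by-factor logarithmic comparison, with $m=1$ treated separately. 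Both arguments ultimately rest on the same phenomenon (for $\tau$ at or below $s^{1/n}$ the subleading contributions are uniformly negative once $s$ is large), but the paper's coefficient-wise observation is essentially a three-line argument, whereas your product estimate is more laborious; what your version buys is an explicit and fairly small threshold on $g=s^{1/n}$ (and hence on $s$) beyond which the equality $\enrs=g_{n,0,s}$ holds, which the paper's ``$s\gg0$'' phrasing leaves implicit. Your handling of $m=1$ by direct evaluation is fine but, as your own elementary inequality shows, already unnecessary once $g\geq4$.
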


\begin{proof}
For $\tau=t/m$, note that \eqref{eq:Pn0sm} implies that $P_{n,0,s,m}(t)>0$ is equivalent to
$$m^n(\tau^n-s)+m^{n-1}(a_{n-1}\tau^{n-1}-b_{n-1}s)+\cdots+m(a_1\tau-b_1s)\geq0,$$
but for $s\gg0$ and $0\leq \tau\leq g_{n,0,s}$, we have $a_i\tau^i-b_is\leq a_ig_{n,0,s}^i-b_is<0$
(and hence $P_{n,0,s,m}(t)\leq 0$ for $0\leq \tau\leq g_{n,0,s}$ with $s\gg0$).
On the other hand, for $\tau>g_{n,0,s}$, we have $\tau^n-s>0$ and
so for $m\gg0$ we have $P_{n,0,s,m}(t)=P_{n,0,s,m}(m\tau)>0$.
Since there are rationals $\tau=t/m$ with $m\gg0$
arbitrarily close to but bigger than $g_{n,0,s}$, the result follows.
\end{proof}

We now consider the case that $r=1$.
The fact that Conjecture {\conjB} holds for lines for $n = 3$ was shown by \cite{refGHVT}. We will
extend this to all $n\geq 3$.
The Hilbert polynomial of $s$ disjoint lines of multiplicity $m$ in $\pr^n$ is
$s((t+1)\binom{m+n-2}{n-1}-(n-1)\binom{m+n-2}{n})$, and hence the
Hilbert polynomial of the corresponding ideal is
\begin{equation}\label{eq:HPforLines}
P_{n,1,s,m}(t)=\binom{t+n}{n}-s\Bigg((t+1)\binom{m+n-2}{n-1}-(n-1)\binom{m+n-2}{n}\Bigg)
\end{equation}
and by Proposition \ref{lem:Lambdan0s}
\begin{equation}\label{eq:LambdaforLines}
\Lambda_{n,1,s}(\tau)=\frac{\tau^n-ns\tau+(n-1)s}{n!}.
\end{equation}

We now verify Conjecture {\conjB} for $r=1$.

\begin{theorem}\label{Thm:ConjB for r=1}
Conjecture {\conjB} holds for  $r=1$ for all $n\geq3$.
\end{theorem}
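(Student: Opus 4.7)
The plan is to prove the reverse inequality $e_{n,1,s}\ge g_{n,1,s}$ for $s\gg 0$; combined with Theorem~\ref{thm:zeroes of Lambda} this gives Conjecture~{\conjB} for $r=1$. Writing $u=g_{n,1,s}$, the goal is to show $P_{n,1,s,m}(t)\le 0$ for every $s\gg 0$ and every integer pair $(t,m)$ with $m\le t<mu$.

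The central structural observation is that, thanks to \eqref{eq:HPforLines}, $P_{n,1,s,m}(t)$ is \emph{convex} in $t$ for $t\ge 0$: only the summand $\binom{t+n}{n}=(t+1)(t+2)\cdots(t+n)/n!$ is nonlinear in $t$, and its second derivative $\frac{2}{n!}(t+1)\cdots(t+n)\sum_{i<j}1/[(t+i)(t+j)]$ is visibly positive. By convexity the maximum of $P_{n,1,s,m}$ on any closed interval is attained at an endpoint, so it suffices to verify both $P_{n,1,s,m}(m)\le 0$ and $P_{n,1,s,m}(T_m)\le 0$ for every $m\ge 1$, where $T_m=\lceil mu\rceil-1$ denotes the largest integer strictly less than $mu$.

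The left endpoint is easy: two binomial identities $((m+1)\binom{m+n-2}{n-1}-(n-1)\binom{m+n-2}{n}=\binom{m+n-2}{n-1}(m+2n-1)/n$ and $n\binom{m+n}{n}/\binom{m+n-2}{n-1}=(m+n)(m+n-1)/m$) collapse $P_{n,1,s,m}(m)\le 0$ to $s\ge 1+n(n-1)/[m(m+2n-1)]$, whose right-hand side is decreasing in $m\ge 1$ with maximum $(n+1)/2$ at $m=1$. Hence this estimate holds uniformly in $m$ once $s\ge (n+1)/2$.

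The right endpoint carries the main content. Writing $T_m=mu-\epsilon_m$ with $\epsilon_m\in(0,1]$, setting $B(t)=(t+1)\cdots(t+n)$, $A=m(m+1)\cdots(m+n-2)$, and substituting the defining relation $s(nu-(n-1))=u^n$ to clear denominators transforms the estimate into the assertion that
\[
(nu-(n-1))[B(T_m)-u^n Am]-u^n A(2n-1-n\epsilon_m)\le 0.
\]
Since $B(T_m)$ and $u^n Am$ share the leading $m^n$-coefficient $u^n$, the left-hand side is a polynomial of degree $n-1$ in $m$. Extracting coefficients using $A=\sum_{j=0}^{n-2}e_j(0,1,\ldots,n-2)\,m^{n-1-j}$ shows that the $u^{n+1}$ part of the $m^k$-coefficient equals $-n\,e_{n-k}(0,1,\ldots,n-2)$, which is $\le 0$ for every $k$ and equals $-\tfrac{n(n-1)(n-2)}{2}\ne 0$ at $k=n-1$ for $n\ge 3$. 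All remaining contributions are $O(u^n)\,m^{n-1}$ uniformly in $\epsilon_m\in(0,1]$ and $m\ge 1$, so for $s$ (equivalently $u$) large enough the dominant $-\tfrac{n(n-1)(n-2)}{2}u^{n+1}m^{n-1}$ term wins uniformly in $m\ge 1$. I expect the main obstacle to be precisely this uniformity: a single $s$-threshold must absorb the genuinely positive $O(u)$ correction $(nu-(n-1))\prod_{i=1}^n(i-\epsilon_m)$ (the $m^0$-contribution, which survives at $m=1$) into the dominant negative $u^{n+1}m^{n-1}$ term, reducing everything to a routine polynomial inequality in $u$ whose coefficients depend only on $n$.
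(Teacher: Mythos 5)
Your proposal is correct, and while it targets the same reduction as the paper --- show $P_{n,1,s,m}(t)\le 0$ for all integers $m\le t<mg_{n,1,s}$ when $s\gg0$, so that $e_{n,1,s}\ge g_{n,1,s}$ --- it gets there by a genuinely different route. The paper fixes the ratio $\tau=t/m$, writes $P_{n,1,s,m}(m\tau)=1+\sum_{i=1}^{n-1}\bigl(a_i\tau^i-s(b_i(\tau-1)+d_i)\bigr)m^i+\Lambda_{n,1,s}(\tau)m^n$, and proves each coefficient with $i\le n-1$ is negative on the whole interval $[1,\sqrt[n-1]{ns}]\supset[1,g_{n,1,s}]$ for $s\gg0$ (convexity in $\tau$ plus checking the two ends of that interval), while $\Lambda_{n,1,s}(\tau)<0$ on $[1,g_{n,1,s})$ by Lemma \ref{lem:zeroes of Lambda}(b); integrality of $P$ at integer arguments then upgrades $P<1$ to $P\le0$. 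You instead use convexity of $P_{n,1,s,m}$ in $t$ for fixed $m$ to reduce to the two integer endpoints $t=m$ and $t=\lceil mg_{n,1,s}\rceil-1$, dispose of $t=m$ by an exact computation (your threshold is right: the condition is $s\ge 1+n(n-1)/(m(m+2n-1))$, maximized at $m=1$), and at the right endpoint exploit the defining relation $s(ng_{n,1,s}-(n-1))=g_{n,1,s}^n$ to cancel the $m^n$ terms identically. I verified the delicate point you flag: the $u^{n+1}$ part of the $m^k$-coefficient is indeed $-n\,e_{n-k}(0,1,\dots,n-2)$, which vanishes for $k=0,1$ (so the $m^0$ coefficient $(nu-n+1)\prod_{i=1}^n(i-\epsilon_m)$ really is positive of order $u$), but all $u^{n+1}$ parts are $\le0$, all remainders are $O(u^n)$ uniformly in $\epsilon_m\in(0,1]$, and $\sum_{k=0}^{n-1}m^k\le n\,m^{n-1}$ for $m\ge1$, so the term $-\tfrac{n(n-1)(n-2)}{2}u^{n+1}m^{n-1}$ (nonzero exactly because $n\ge3$) absorbs everything past a single threshold on $u$, hence on $s$. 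Comparing the two: the paper's version avoids ceiling functions and the $\epsilon_m$ bookkeeping and needs only the crude bound $g_{n,1,s}<\sqrt[n-1]{ns}$, at the cost of controlling every $m$-coefficient over an entire $\tau$-interval and invoking integrality at the end; yours pins $\tau$ at $g_{n,1,s}$ exactly, obtains a genuine $\le0$ without integrality, and makes visible where the hypothesis $n\ge3$ enters.
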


\begin{proof}
By Lemma \ref{lem:zeroes of Lambda} we have
$\inf\left\{\frac{t}{m}:\, t\geq m,\, P_{n,1,s,m}(t)>0 \right\}\leq g_{n,1,s}$;
we need to justify the reverse inequality when $s\gg0$.
To do so, note that $\binom{t+n}{n}=\frac{(t+n)\cdots(t+1)}{n!}$, hence substituting
$t=m\tau$ gives $\frac{(m\tau+n)\cdots (m\tau+1)}{n!}=1+\sum_{i=1}^na_im^i\tau^i$,
for appropriate coefficients $a_i>0$. Similarly, substituting $t=m\tau$
into $(t+1)\binom{m+n-2}{n-1}-(n-1)\binom{m+n-2}{n}$ gives
\[
\begin{split}
&(m\tau+1)\binom{m+n-2}{n-1}-(n-1)\binom{m+n-2}{n}\\
&=\frac{(m+n-2)\cdots (m+1)m}{n!}\Bigg(n(m\tau+1)-(n-1)(m-1)\Bigg)\\
&=\frac{(m+n-2)\cdots (m+1)m}{n!}\Bigg((n(\tau-1)+1)m+(2n-1)\Bigg)\\
&=\sum_{i=2}^nb_i(\tau-1)m^i+\sum_{j=1}^nd_jm^j
\end{split}
\]
for appropriate positive coefficients $b_i$ and $d_j$ (except we define $b_1=0$). Therefore,
from equation \eqref{eq:HPforLines} we have
$P_{n,1,s,m}(m\tau)=1+\sum_{i=1}^{n-1}(a_i\tau^i-s(b_i(\tau-1)+d_i))m^i+\Lambda_{n,1,s}(\tau)m^n$.
For $s>1$, it follows by Lemma \ref{lem:zeroes of Lambda}(b) that
$n!\Lambda_{n,1,s}(\tau)=\tau^n-ns\tau+(n-1)s$ is negative for $\tau\in [1,g_{n,1,s})$
and positive for $\tau>g_{n,1,s}$. Since $\tau^n-ns\tau+(n-1)s>0$
for $\tau=\sqrt[n-1]{ns}$, we see $g_{n,1,s}<\sqrt[n-1]{ns}$.

Suppose we check that each coefficient $a_i\tau^i-s(b_i(\tau-1)+d_i)$, $1\leq i\leq n-1$, is negative
on the interval $[1,\sqrt[n-1]{ns}]$ for $s\gg0$. Then for $s\gg0$ and any integers $t,m\geq 1$ such that
$\tau=t/m\leq g_{n,1,s}$ we would have
$$P_{n,1,s,m}(t)=P_{n,1,s,m}(m\tau)=1+\sum_{i=1}^{n-1}(a_i\tau^i-s(b_i(\tau-1)+d_i))m^i+\Lambda_{n,1,s}(\tau)m^n<1$$
and hence $P_{n,1,s,m}(t)\leq 0$. Therefore, it would follow that
$\inf\left\{\frac{t}{m}:\, t\geq m,\, P_{n,1,s,m}(t)>0 \right\}\geq g_{n,1,s}$, as we wanted to show.
But $a_i\tau^i-s(b_i(\tau-1)+d_i)$ is concave up on $\tau\geq1$,
and for $s\gg0$ we have $a_i\tau^i-s(b_i(\tau-1)+d_i)<0$ at $\tau=1$, so for $s\gg0$,
we see $a_i\tau^i-s(b_i(\tau-1)+d_i)$ has a single root on the interval $\tau\geq 1$.
Thus, to show $a_i\tau^i-s(b_i(\tau-1)+d_i)<0$ on the interval $[1,\sqrt[n-1]{ns}]$ when $s\gg0$,
it suffices to check that $a_i\tau^i-s(b_i(\tau-1)+d_i)<0$ for $\tau=\sqrt[n-1]{ns}$ when $s\gg0$.
But after the substitution $\tau=\sqrt[n-1]{ns}$, $a_i\tau^i-s(b_i(\tau-1)+d_i)$ becomes a polynomial
in $\sqrt[n-1]{s}$ with negative leading coefficient, and hence we will have
$a_i\tau^i-s(b_i(\tau-1)+d_i)<0$ for $\tau=\sqrt[n-1]{ns}$ for $s\gg0$, as desired.
\end{proof}

\section{A series of examples}\label{section4}

   We now compute $\gamma(I)$ for the ideal $I$ of $s=(n-1)^{(n-2)}$ general lines in $\P^n$.

\begin{proposition}\label{prop:lines in pn}
   Let $I=J_{n,1,(n-1)^{(n-2)}}$ be the ideal of $(n-1)^{(n-2)}$ general lines in $\P^n$. Then
   $$\gamma(I)=g_{n,1,(n-1)^{(n-2)}}=n-1.$$
\end{proposition}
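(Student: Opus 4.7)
The plan has two parts. For the upper bound, I first verify $g_{n,1,(n-1)^{n-2}}=n-1$ by direct substitution into \eqref{eq:LambdaforLines}: with $\tau=n-1$ and $s=(n-1)^{n-2}$,
\[
n!\,\Lambda_{n,1,s}(n-1) = (n-1)^n - n(n-1)^{n-1} + (n-1)^{n-1} = (n-1)^{n-1}\bigl((n-1)-n+1\bigr) = 0,
\]
so $n-1$ is a root. Since $n-1\geq 1$, Lemma~\ref{lem:zeroes of Lambda} identifies it as the unique largest real root, and Theorem~\ref{thm:zeroes of Lambda} then gives $\gamma(I)\leq n-1$.

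For the matching lower bound I would exploit the fact that $\alpha(J^{(m)})$ can only decrease under specialization of the configuration, so $\gamma(I)$ for $s$ generic lines is at least $\gamma$ of any specific configuration of $s$ disjoint lines. It therefore suffices to exhibit one configuration of $s=(n-1)^{n-2}$ disjoint lines with Waldschmidt constant at least $n-1$. My candidate places the lines on a scroll $V\subset\P^n$ of dimension $n-1$ and degree $n-1$ equipped with a line ruling $\sigma\colon V\dashrightarrow\P^{n-2}$, with the chosen lines being the fibres over $s$ general points $q_1,\ldots,q_s\in\P^{n-2}$. For $n=3$, $V$ is a smooth quadric. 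For $n\geq 4$, $V$ is the image of a generic linear projection of the Segre variety $\P^1\times\P^{n-2}\subset\P^{2n-3}$ into $\P^n$, a (possibly singular) hypersurface of degree $n-1$.

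Given $f\in I^{(m)}$ of degree $d$, the argument splits. If $V\subset Z(f)$, factor $f=F_V\cdot g$; since $F_V$ has degree $n-1$ and vanishes to order $1$ on each line of $V$, $g$ has degree $d-(n-1)$ and vanishes to order $m-1$ on each line, so induction on $m$ (base case $\alpha(I)\geq n-1$ by a direct dimension count) yields $d\geq m(n-1)$. Otherwise $f|_V$ is a nonzero section of $\mathcal{O}_V(d)$; vanishing to order $m$ along each fibre $\sigma^{-1}(q_i)$ implies, via the projection formula, that $\sigma_*(f|_V)$ is a nonzero section of $\sigma_*\mathcal{O}_V(d)$ on $\P^{n-2}$ vanishing to order $m$ at each $q_i$. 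Any nonzero component of this section is a form on $\P^{n-2}$ of degree $d$ vanishing to order $m$ at $(n-1)^{n-2}$ general points, and Evain's theorem (cited in Remark~\ref{rem:conjs}), applied to the perfect $(n-2)$-nd power $s=(n-1)^{n-2}$, gives $\gamma=n-1$ for such configurations, forcing $d\geq m(n-1)$.

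The principal technical hurdle is handling the singularities of $V$ for $n\geq 4$: the projection formula and the identification of vanishing along a fibre with vanishing of the pushforward at the base point must be verified in the singular setting. A clean workaround is to carry out the analysis upstairs on the smooth Segre $\P^1\times\P^{n-2}$, treating $f$ via the $(n+1)$-dimensional linear subseries of $|\mathcal{O}(1,1)|$ defining the projection to $\P^n$ at the cost of tracking its base points.
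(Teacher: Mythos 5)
Your proposal follows essentially the same route as the paper: specialize the lines to rulings of the degree-$(n-1)$ scroll that is the image of $\P^1\times\P^{n-2}$ under an incomplete $(1,1)$-system, split on whether $f$ contains the scroll, kill the non-containment case with Evain's theorem for $(n-1)^{n-2}$ general points in $\P^{n-2}$, and induct on $m$ by factoring out the scroll's equation --- the only cosmetic difference being that the paper restricts to the $\P^{n-2}$-cross-sections of the scroll rather than decomposing the pushforward along the ruling, and it likewise sidesteps the singularities of $V$ by working with the product upstairs. One small correction: the base case $\alpha(I)\geq n-1$ does not follow from a ``direct dimension count'' (a count only shows the expected dimension is nonpositive; concluding the actual dimension is zero requires the lines to impose independent conditions, which is the Hartshorne--Hirschowitz theorem the paper cites), but in your setup the base case is anyway subsumed by the main dichotomy, since for $m=1$ a nonzero form of degree $n-2$ cannot contain the degree-$(n-1)$ scroll and the restriction case is excluded because $\alpha\geq\gamma=n-1$ for the points downstairs.
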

\proof
   It is convenient to shift $n$ and consider the ideal $I$ of $n^{(n-1)}$
   general lines in $\P^{n+1}$ for which we claim $\gamma(I)=n$.

   The inequality $\gamma(I)\leq n$ follows readily from Theorem \ref{thm:zeroes of Lambda},
   since it is easy to check directly that $g_{n+1,1,n^{(n-1)}}=n$.

   For the reverse inequality, it suffices to show that
   \begin{equation}\label{eq:sys0}
      h^0(\P^{n+1},\calo_{\P^{n+1}}(nm-1)\otimes\cali^m)=0
   \end{equation}
   for all $m\geq 1$, where $\cali$ is as usual the sheafification of $I$.
   We proceed by induction on $m$. The case $m=1$ follows easily from \cite{HarHir81}.
   We assume that \eqref{eq:sys0} holds for $m-1$ and aim for proving this for $m$.
   By the upper semicontinuity of cohomology functions it is enough to show
   the statement for lines in a special position. To this end we first choose
   $n$ general lines $L_1,\dots,L_n$ in $\P^{n+1}$. On these lines we
   mimic the construction of a rational $n$--fold scroll as in \cite[Exercise 8.26]{Har92}.
   Specifically, for $i=2,\dots n$ let $\alpha_i:L_1\to L_i$ be a general linear isomorphism.
   Let $X\subset\P^{n+1}$ be defined as the closure of the set of projective subspaces
   in $\P^{n+1}$ spanned by points $p,\alpha_2(p),\dots,\alpha_n(p)$.
   All these subspaces are isomorphic to $\P^{n-1}$ if $L_i$ and $\alpha_i$
   are chosen sufficiently general. Alternatively $X$ can be viewed
   as the image of $\P^{n-1}\times \P^1$ embedded by a morphism $\phi$
   induced by a non-complete linear subsystem of bidegree $(1,1)$.
   In particular $\deg(X)=n$ as in the case of the Segre variety $\Sigma_{n-1,1}$,
   see \cite[Example 18.15]{Har92}.
   
   Now, let $P_1,\dots,P_{n^{n-1}}$ be $n^{n-1}$ general points in $\P^{n-1}$.
   We consider the $n^{n-1}$ lines given as images of $\left\{P_i\right\}\times\P^1$
   under $\phi$. Assume that there exists a divisor $D$ of degree $mn-1$
   vanishing along all these lines with multiplicity at least $m$.
   Restricting $D$ to $\phi(\P^{n-1}\times\left\{x\right\})$ either
   gives a divisor $D_x$ of degree $mn-1$ in $\P^{n-1}$ vanishing to order $m$
   at $n^{n-1}$ general points (the intersection points of 
   $\phi(\P^{n-1}\times\left\{x\right\})$ with $\phi(\left\{P_i\right\}\times\P^1)$)
   or $D$ contains $\phi(\P^{n-1}\times\left\{x\right\})$. The first
   possibility is excluded by a result of Evain \cite[Theorem 3]{Evain} 
   to the effect that $\gamma$ of $a^{n-1}$ general points in $\P^{n-1}$
   equals $a$. Hence the second possibility holds for all points $x\in \P^1$.
   Thus $D$ contains $X$. Applying the induction hypothesis to the divisor $D-X$,
   which has degree $(m-1)n-1$, we get a contradiction. This shows that $\gamma=n$
   for $n^{n-1}$ general lines in $\P^{n+1}$.
\endproof
   Along the same lines computer experiments provide strong evidence
   in favor of the following example. We note that $g_{11,2,729}=3$.
\begin{problem}   
   Show that $\gamma=3$ for $729=9^3$ general planes in $\P^{11}$.
\end{problem}

\section{Appendix 1: Combinatorics}

Lemma \ref{combinatorialLemma2} is used in the proof of Theorem \ref{thm:zeroes of Lambda},
while Lemma \ref{combinatorialLemma1} is used in the proof of Lemma \ref{combinatorialLemma2}.

\begin{lemma}\label{combinatorialLemma1}
Let $a\geq0$ be an integer. Then we have
$$\sum_{0\leq i<m} \binom{i+a}{a}=\binom{m+a}{a+1}.$$
\end{lemma}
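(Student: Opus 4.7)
The plan is to prove this standard \textquotedblleft hockey stick\textquotedblright{} identity by a short telescoping argument based on Pascal's rule. The key observation is that the summand $\binom{i+a}{a}$ can itself be written as a difference of two consecutive binomial coefficients of the form that appears on the right-hand side.

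Concretely, Pascal's identity gives
\[
\binom{i+a+1}{a+1} = \binom{i+a}{a+1} + \binom{i+a}{a},
\]
so that
\[
\binom{i+a}{a} = \binom{i+a+1}{a+1} - \binom{i+a}{a+1}.
\]
Summing this identity from $i=0$ to $i=m-1$, the right-hand side telescopes, leaving
\[
\sum_{0\leq i<m}\binom{i+a}{a} = \binom{m+a}{a+1} - \binom{a}{a+1} = \binom{m+a}{a+1},
\]
since $\binom{a}{a+1}=0$. This is the desired identity.

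Alternatively, one could proceed by induction on $m$. The base case $m=1$ reduces to $\binom{a}{a}=\binom{a+1}{a+1}=1$, and the inductive step follows from applying Pascal's rule to $\binom{m+a}{a+1}+\binom{m+a}{a}$ to get $\binom{m+a+1}{a+1}$. There is no real obstacle here; the only subtlety is making sure to interpret $\binom{a}{a+1}$ as $0$ (equivalently, handling the $m=0$ boundary) so that the telescoping sum collapses cleanly.
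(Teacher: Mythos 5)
Your proof is correct, and your second (inductive) argument is exactly the paper's proof, while your primary telescoping argument is just the same Pascal's-rule induction unrolled. Nothing is missing; the remark that $\binom{a}{a+1}=0$ properly handles the boundary term.
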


\begin{proof}
The formula holds for $m=1$. Now induct on $m$:
$\sum_{0\leq i<m+1} \binom{i+a}{a}=
\binom{m+a}{a}+\sum_{0\leq i<m} \binom{i+a}{a}=
\binom{m+a}{a}+\binom{m+a}{a+1}= \binom{m+1+a}{a+1}$.
\end{proof}

We can now determine $c_{n,1,m,t}$.

\begin{lemma}\label{combinatorialLemma2}
Let $a\geq0$ be an integer. Then we have
\begin{itemize}
\item[(a)] $\displaystyle\sum_{0\leq i<m} i\binom{i+a}{a}=(a+1)\binom{m+a}{a+2}$;
\item[(b)] $\displaystyle\sum_{0\leq i<m} (t-i+1)\binom{i+a}{a} =
(t+1)\binom{m+a}{a+1}-(a+1)\binom{m+a}{a+2}$; and
\item[(c)] $\displaystyle c_{n,1,m,t}=(t+1)\binom{m+n-2}{n-1}-(n-1)\binom{m+n-2}{n}$.
\end{itemize}
\end{lemma}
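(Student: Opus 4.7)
The plan is to prove the three parts in order, since (b) follows by splitting and combining (a) with Lemma \ref{combinatorialLemma1}, and (c) is the specialization of the definition of $c_{n,r,m,t}$ in Lemma \ref{lem:NumCond} to $r=1$, recognized as the sum in (b).

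For part (a), the key observation is the identity
\[
i\binom{i+a}{a} = (a+1)\binom{i+a}{a+1},
\]
which follows by rewriting both sides as $(i+a)!/(a!\,(i-1)!)$. Substituting this into the sum gives
\[
\sum_{0\leq i<m} i\binom{i+a}{a} = (a+1)\sum_{0\leq i<m}\binom{i+a}{a+1}.
\]
The $i=0$ term vanishes, so shifting the index by $j=i-1$ turns the remaining sum into $\sum_{0\leq j<m-1}\binom{j+(a+1)}{a+1}$, which by Lemma \ref{combinatorialLemma1} (applied with $a$ replaced by $a+1$ and $m$ replaced by $m-1$) equals $\binom{m+a}{a+2}$. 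This yields (a).

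For part (b), I would simply split the sum linearly:
\[
\sum_{0\leq i<m}(t-i+1)\binom{i+a}{a} = (t+1)\sum_{0\leq i<m}\binom{i+a}{a}-\sum_{0\leq i<m}i\binom{i+a}{a},
\]
and then apply Lemma \ref{combinatorialLemma1} to the first sum and part (a) to the second. This gives exactly $(t+1)\binom{m+a}{a+1}-(a+1)\binom{m+a}{a+2}$.

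For part (c), specializing Lemma \ref{lem:NumCond} to $r=1$ yields
\[
c_{n,1,m,t} = \sum_{0\leq i<m}\binom{t-i+1}{1}\binom{i+n-2}{n-2} = \sum_{0\leq i<m}(t-i+1)\binom{i+n-2}{n-2},
\]
and applying part (b) with $a=n-2$ immediately gives the stated formula. There is no real obstacle in this proof; the only subtlety is the index shift needed after invoking the identity $i\binom{i+a}{a}=(a+1)\binom{i+a}{a+1}$ so that Lemma \ref{combinatorialLemma1} can be reapplied.
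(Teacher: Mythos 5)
Your proof is correct. Parts (b) and (c) coincide exactly with the paper's argument: split the sum linearly and apply Lemma \ref{combinatorialLemma1} together with (a), then specialize to $a=n-2$. For part (a) you take a slightly different route: the paper runs a two-line induction on $m$, whose inductive step silently uses the absorption identity $m\binom{m+a}{a}=(a+1)\binom{m+a}{a+1}$ together with Pascal's rule, whereas you apply that same absorption identity termwise inside the sum, kill the $i=0$ term, reindex, and invoke Lemma \ref{combinatorialLemma1} a second time with $(a,m)$ replaced by $(a+1,m-1)$. The two arguments rest on the same identity; yours avoids induction and makes the mechanism explicit, at the cost of the index shift you flag (which is handled correctly, including the degenerate $m=1$ case where both sides vanish). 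Either version is complete.
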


\begin{proof}
(a) The formula holds for $m=1,2$. Now induct on $m$:
$\sum_{0\leq i<m+1} i\binom{i+a}{a}=(a+1)\binom{m+a}{a+2}+m\binom{m+a}{a}=
(a+1)\big(\binom{m+a}{a+2}+\binom{m+a}{a+1}\big)=(a+1)\binom{m+a+1}{a+2}$.

(b) From (a) and Lemma \ref{combinatorialLemma1}, we have
\[
\begin{split}
\sum_{0\leq i<m} (t-i+1)\binom{i+a}{a} &= \sum_{0\leq i<m} (t+1)\binom{i+a}{a} -\sum_{0\leq i<m} i\binom{i+a}{a}\\
                                                                     &=(t+1)\binom{m+a}{a+1}-(a+1)\binom{m+a}{a+2}.
\end{split}
\]
(c) Apply (b) with $a=n-2$.
\end{proof}

To verify our assertion in Remark \ref{rmk:nef cone} that $(\tau H-E)^n=n!\Lambda_{n,r,s}(\tau)$,
we need a couple of identities.

\begin{lemma}\label{lem:H.E identities}
Let $t,j\geq 0$ be integers. Then
$$\sum_{i=0}^j(-1)^i\binom{t+j}{j-i}\binom{t+i}{i}=0;$$
moreover, if $t\geq 1$, then
$$\sum_{i=0}^j(-1)^i\binom{t+j}{j-i}\binom{t+i-1}{i}=1,$$
hence $-1+\sum_{i=0}^{j-1}(-1)^i\binom{t+j}{j-i}\binom{t+i-1}{i}=(-1)^{j+1}\binom{t+j-1}{j}$.
\end{lemma}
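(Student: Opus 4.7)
The plan is to derive both identities from the product of formal power series, using the Newton-Taylor expansions
$$(1-x)^{-(u+1)}=\sum_{i\geq 0}\binom{u+i}{i}x^i,\qquad (1-x)^N=\sum_{k=0}^N(-1)^k\binom{N}{k}x^k,$$
together with their variant $(1-x)^{-t}=\sum_{i\geq 0}\binom{t+i-1}{i}x^i$, valid for $t\geq 1$.

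For the first identity I would extract the coefficient of $x^j$ from the factorization
$$(1-x)^{t+j}\cdot(1-x)^{-(t+1)}=(1-x)^{j-1}.$$
The Cauchy product on the left-hand side produces $[x^j]=\sum_{i=0}^j(-1)^{j-i}\binom{t+j}{j-i}\binom{t+i}{i}$. For $j\geq 1$ the right-hand side is a polynomial of degree $j-1$, so its coefficient of $x^j$ vanishes; multiplying the resulting equation by $(-1)^j$ yields the claimed vanishing.

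For the second identity, the same trick applies to
$$(1-x)^{t+j}\cdot(1-x)^{-t}=(1-x)^j,$$
where the hypothesis $t\geq 1$ is exactly what makes $(1-x)^{-t}$ a genuine power series with the correct coefficients. Extracting $[x^j]$ on both sides gives
$$\sum_{i=0}^j(-1)^{j-i}\binom{t+j}{j-i}\binom{t+i-1}{i}=(-1)^j\binom{j}{j}=(-1)^j,$$
and multiplication by $(-1)^j$ converts this into the stated value $1$.

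The third identity is then a bookkeeping consequence: the $i=j$ summand of the sum that equals $1$ is $(-1)^j\binom{t+j}{0}\binom{t+j-1}{j}=(-1)^j\binom{t+j-1}{j}$, so isolating it gives $\sum_{i=0}^{j-1}(-1)^i\binom{t+j}{j-i}\binom{t+i-1}{i}=1-(-1)^j\binom{t+j-1}{j}$, and subtracting $1$ from both sides produces the right-hand side $(-1)^{j+1}\binom{t+j-1}{j}$. There is essentially no obstacle in this argument; the whole proof is a tidy generating function manipulation, and the only care required is with sign conventions and with noting that the first identity needs $j\geq 1$ for the polynomial $(1-x)^{j-1}$ to have no $x^j$ term.
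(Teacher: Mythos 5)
Your proof is correct, and it takes a genuinely different route from the paper's. You obtain both sums as the coefficient of $x^j$ in the products $(1-x)^{t+j}(1-x)^{-(t+1)}=(1-x)^{j-1}$ and $(1-x)^{t+j}(1-x)^{-t}=(1-x)^{j}$, which handles the two identities uniformly as instances of one Vandermonde-type convolution. The paper instead proves the first identity by applying $D_x^t$ to $(1-x)^{t+j}$, expanding, and evaluating at $x=1$, and then proves the second identity by induction on $j$, adding the first identity (with $t-1$ in place of $t$) to the inductive hypothesis and recombining via Pascal's rule; the third identity is, in both treatments, the same bookkeeping step of isolating the $i=j$ term. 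Your version buys a shorter, more unified argument in which the role of the hypothesis $t\geq 1$ (making $(1-x)^{-t}$ have the coefficients $\binom{t+i-1}{i}$) is transparent, at the mild cost of invoking formal power series rather than staying with finite sums and induction. You are also right to flag that the first identity requires $j\geq 1$ (for $j=0$ the sum equals $1$); the paper's statement elides this, and its own proof has the same implicit restriction, since evaluating $(j+t)\cdots(j+1)(1-x)^j$ at $x=1$ gives $0$ only when $j\geq 1$. This edge case is harmless for the application in Corollary \ref{cor:cohring}, where the identity is used only with $j\geq 1$.
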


\begin{proof}
If we denote differentiation with respect to $x$ by $D_x$, then
$(j+t)\cdots (j+1)(1-x)^j=D^t_x(1-x)^{t+j}=D_x^t\sum_{i=0}^{t+j}(-1)^i\binom{t+j}{t+j-i}x^i=
\sum_{i=t}^{t+j}(-1)^i\binom{t+j}{t+j-i}t!\binom{i}{t}x^{i-t}=
t!\sum_{i=0}^j(-1)^{t+i}\binom{t+j}{j-i}\binom{t+i}{t}x^i$,
and evaluating at $x=1$ gives $0=(-1)^tt!\sum_{i=0}^j(-1)^i\binom{t+j}{j-i}\binom{t+i}{t}$
and hence $\sum_{i=0}^j(-1)^i\binom{t+j}{j-i}\binom{t+i}{i}=0$.

We prove the second formula by induction on $j$. When $j=0$ it is just $\binom{t+0}{0}\binom{t-1}{0}=1$.
So assume $\sum_{i=0}^{j-1}(-1)^i\binom{t+j-1}{j-i-1}\binom{t+i-1}{i}=1$ holds for some $j\geq 1$.
Adding our first formula (with $t-1$ in place of $t$) to this gives
\[
\begin{split}
1&=\sum_{i=0}^{j-1}(-1)^i\binom{t+j-1}{j-i-1}\binom{t+i-1}{i}+\sum_{i=0}^j(-1)^i\binom{t+j-1}{j-i}\binom{t+i-1}{i}\\
  &=\Bigg(\sum_{i=0}^{j-1}(-1)^i\Bigg[\binom{t+j-1}{j-i-1}+\binom{t+j-1}{j-i}\Bigg]\binom{t+i-1}{i}\Bigg)
  +(-1)^j\binom{t+j-1}{0}\binom{t+j-1}{j}\\
  &=\sum_{i=0}^j(-1)^i\binom{t+j}{j-i}\binom{t+i-1}{i}.
\end{split}
\]
\end{proof}

\begin{corollary}\label{cor:cohring}
Let $X$ be the blow up of $\pr^n$ along an $r$-plane $P$ for some $r<n$,
let $H$ be the hyperplane class pulled back to $X$ and let $E$ be the exceptional locus
of $P$. Then $H^n=1$, $H^jE^{n-j}=(-1)^{n+1-r}\binom{n-j-1}{r-j}$
for $0\leq j\leq r$, and $H^jE^{n-j}=0$ for $r<j<n$.
\end{corollary}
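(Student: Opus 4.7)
The plan is to work on the exceptional divisor $E$, identified with $\P(\mathcal{N})$ for $\mathcal{N} = \mathcal{N}_{P/\P^n}$. Because $P \cong \P^r$ is linearly embedded in $\P^n$, we have $\mathcal{N} \cong \calo_{\P^r}(1)^{\oplus(n-r)}$, so $c(\mathcal{N}) = (1+f)^{n-r}$, where $f \in A^1(E)$ denotes the pullback of the hyperplane class of $P$. Let $\xi = c_1(\calo_{\P(\mathcal{N})}(1))$. Under the standard blow-up convention, $E|_E = -\xi$ and $H|_E = f$, since $H$ is pulled back from $\P^n$ and restricts to the hyperplane class on $P$.

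First, $H^n = 1$ follows immediately from the projection formula, since $\pi\colon X\to\P^n$ is birational. For $0 \leq j < n$, the self-intersection formula gives
\[
H^jE^{n-j} = (H|_E)^j (E|_E)^{n-j-1} = (-1)^{n-j-1}f^j\xi^{n-j-1},
\]
computed as an intersection number on $E$. When $r < j < n$, the class $f^j$ vanishes on $E$ (since $f$ is pulled back from $\P^r$), so the intersection is $0$, settling the middle range.

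For $0 \leq j \leq r$, push forward along $\rho := \pi|_E \colon E \to P$. Since $\rho$ is a $\P^{n-r-1}$-bundle, $\rho_*(\xi^{n-r-1+k}) = s_k(\mathcal{N})$, and from $s(\mathcal{N}) = c(\mathcal{N})^{-1} = (1+f)^{-(n-r)}$ one reads off $s_{r-j}(\mathcal{N}) = (-1)^{r-j}\binom{n-j-1}{r-j}f^{r-j}$. Using $\deg f^r = 1$ on $P$,
\[
H^jE^{n-j} = (-1)^{n-j-1}\cdot (-1)^{r-j}\binom{n-j-1}{r-j} = (-1)^{n+1-r}\binom{n-j-1}{r-j},
\]
where in the last step one uses $n-j-1+r-j \equiv n+1-r \pmod 2$.

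The main obstacle is sign and convention bookkeeping --- which side of $\P(\mathcal{N})$ one takes, whether $E|_E$ equals $\xi$ or $-\xi$, and the parity calculation $(-1)^{n+r-1} = (-1)^{n+1-r}$. An elementary alternative avoids Segre classes entirely: use the defining relation $(f+\xi)^{n-r} = 0$ on $E = \P(\mathcal{N})$ together with Lemma \ref{lem:H.E identities} to reduce $f^j\xi^{n-j-1}$ inductively to $f^r\xi^{n-r-1}$ and read off the coefficient combinatorially. Since the authors prepared Lemma \ref{lem:H.E identities} precisely for this task, this is presumably how they proceed.
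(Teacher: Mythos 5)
Your argument is correct, and it takes a genuinely different route from the paper. You work entirely on the exceptional divisor: identify $E=\P(\mathcal{N})$ with $\mathcal{N}\cong\calo_{\P^r}(1)^{\oplus(n-r)}$, use $\calo_E(E)=\calo_{\P(\mathcal{N})}(-1)$ to write $H^jE^{n-j}=(-1)^{n-j-1}\deg_E(f^j\xi^{n-j-1})$, kill the range $r<j<n$ by dimension reasons on $P$, and evaluate the remaining cases by the Segre-class pushforward $\rho_*(\xi^{n-r-1+k})=s_k(\mathcal{N})$ with $s(\mathcal{N})=(1+f)^{-(n-r)}$; all the sign bookkeeping checks out, including $(-1)^{n+r-1}=(-1)^{n+1-r}$. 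The paper instead stays on $X$: it imports the two ring relations $H^{r+1}E=0$ and $(H-E)^{n-r}=0$ from Maggesi's description of the cohomology of the blow-up, expands $0=H^{r-i}(H-E)^{n-r+i}$, and solves recursively for $H^{r-i}E^{n-r+i}$ by induction on $i$, using Lemma \ref{lem:H.E identities} to close the binomial sum. So your guess in the last paragraph is close but not exact: the authors do use Lemma \ref{lem:H.E identities}, but via the relation $(H-E)^{n-r}=0$ on $X$ rather than $(f+\xi)^{n-r}=0$ on $E$ (the two are of course related by restriction). Your approach buys independence from the citation to Maggesi and from Lemma \ref{lem:H.E identities}, at the cost of invoking the standard normal-bundle and Segre-class machinery and its attendant sign conventions; the paper's approach is more elementary in its inputs but needs the separate combinatorial lemma and an induction.
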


\begin{proof}
It is obvious that $H^n=1$, and
we have $H^{r+1}E=0$ and $(H-E)^{n-r}=0$ by \cite[Corollary 2.5]{Magg00}.
>From $H^{r+1}E=0$ we see that $H^jE^{n-j}=0$ for $r<j<n$.
For $j=r$ we have
$0=H^r(H-E)^{n-r}=H^n+\sum_{l=0}^{n-r}(-1)^{n-r-l}\binom{n-r}{l}H^{r+l}E^{n-r-l}=1+(-1)^{n-r}H^rE^{n-r}$,
so $H^rE^{n-r}=(-1)^{n-r+1}=(-1)^{n+1-r}\binom{n-r-1}{0}$.
Now let $j=r-i$ for some $0<i\leq r$.
Then $H^jE^{n-j}=H^{r-i}E^{n-r+i}$ and we want to verify that
$H^{r-i}E^{n-r+i}=(-1)^{n+1-r}\binom{n-r+i-1}{i}$. By induction we may assume
that $H^{r-l}E^{n-r+l}=(-1)^{n+1-r}\binom{n-r+l-1}{l}$ holds for $0\leq l<i$.
Thus
\[
\begin{split}
0&=H^{r-i}(H-E)^{n-r+i}=H^n+\sum_{l=0}^i(-1)^{n-r+l}\binom{n-r+i}{i-l}H^{r-i}H^{i-l}E^{n-r+l}\\
&=1+\sum_{l=0}^{i-1}(-1)^{l+1}\binom{n-r+i}{i-l}\binom{n-r+l-1}{l}+(-1)^{n-r+i}\binom{n-r+i}{0}H^{r-i}E^{n-r+i},
\end{split}
\]
so
$H^{r-i}E^{n-r+i}=(-1)^{n-r+i}\big(-1+\sum_{l=0}^{i-1}(-1)^l\binom{n-r+i}{i-l}\binom{n-r+l-1}{l}\big)$.
By Lemma \ref{lem:H.E identities} this equals $(-1)^{n-r+1}\binom{n-r+i-1}{i}$, as claimed.
\end{proof}

Now let $X$ be the blow up of
$\pr^n$ along $s$ disjoint $r$-planes, $P_1,\ldots,P_r$.
Let $H$ be the hyperplane class pulled back to $X$, let $E_i$ be the exceptional locus
of $P_i$ and let $E=E_1+\cdots+E_s$.
Clearly, $E_iE_j=0$ for $i\neq j$, and hence by Corollary \ref{cor:cohring}, we have
$H^n=1$, $H^jE^{n-j}=s(-1)^{n+1-r}\binom{n-j-1}{r-j}$
for $0\leq j\leq r$ and $H^jE^{n-j}=0$ for $r<j<n$. One can then compute that
$$(\tau H-E)^n=\tau^n-s\sum_{j=0}^r(-1)^{r-j}\binom{n}{j}\binom{n-j-1}{r-j}\tau^j.$$
By applying Lemma \ref{lem:H.E identities}, we can verify that
this is equal to $n!\Lambda_{n,r,s}(\tau)=\tau^n-s\sum_{j=0}^r\binom{n}{j}(\tau-1)^j$:

\begin{corollary}\label{cor:IntThry=Lambda}
As polynomials in a variable $x$, we have
$$x^n-s\sum_{j=0}^r(-1)^{r-j}\binom{n}{j}\binom{n-j-1}{r-j}x^j=
x^n-s\sum_{j=0}^r\binom{n}{j}(x-1)^j.$$
\end{corollary}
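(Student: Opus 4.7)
\emph{Plan.} The plan is to verify the asserted polynomial identity by expanding the right-hand side in powers of $x$ and matching coefficients with the left. After cancelling the common $x^n$ term and the factor $-s$, the identity reduces to
$$\sum_{j=0}^r(-1)^{r-j}\binom{n}{j}\binom{n-j-1}{r-j}x^j = \sum_{j=0}^r\binom{n}{j}(x-1)^j,$$
and I will compare the coefficients of $x^k$ for each $0\le k\le r$ on both sides.

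Expanding $(x-1)^j=\sum_{k=0}^j(-1)^{j-k}\binom{j}{k}x^k$ on the right-hand side and interchanging the order of summation, the coefficient of $x^k$ becomes $\sum_{j=k}^r(-1)^{j-k}\binom{n}{j}\binom{j}{k}$, while on the left-hand side it is $(-1)^{r-k}\binom{n}{k}\binom{n-k-1}{r-k}$. Applying the absorption identity $\binom{n}{j}\binom{j}{k}=\binom{n}{k}\binom{n-k}{j-k}$, factoring out $\binom{n}{k}$, and substituting $l=j-k$, $m=r-k$, $p=n-k\ge 1$, the required equality of coefficients collapses to the single combinatorial identity
$$\sum_{l=0}^m(-1)^l\binom{p}{l}=(-1)^m\binom{p-1}{m}.$$

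This classical partial alternating sum formula admits a very quick proof by Pascal's rule: writing $\binom{p}{l}=\binom{p-1}{l}+\binom{p-1}{l-1}$ makes the left-hand side telescope to $(-1)^m\binom{p-1}{m}$. Alternatively, one can obtain it by induction on $m$, or extract it from Lemma \ref{lem:H.E identities} after a suitable re-indexing. No serious obstacle is involved; the real content is purely bookkeeping. The only subtlety is to keep track of the sign $(-1)^{r-j}$ and to use the constraint $k\le r<n$, which both guarantees $p\ge 1$ in the reduction above and ensures that on the right-hand side no term $\binom{n}{j}(x-1)^j$ with $j\le r$ can contribute to $x^n$, so that the initial cancellation is legitimate.
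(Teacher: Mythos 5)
Your argument is correct, but it takes a genuinely different route from the paper's. You expand $(x-1)^j$, compare coefficients of $x^k$, use the absorption identity $\binom{n}{j}\binom{j}{k}=\binom{n}{k}\binom{n-k}{j-k}$, and reduce everything to the classical partial alternating sum $\sum_{l=0}^m(-1)^l\binom{p}{l}=(-1)^m\binom{p-1}{m}$, proved by telescoping with Pascal's rule; all of these steps check out, and your remark that $k\le r<n$ keeps $p=n-k\ge1$ and prevents any $(x-1)^j$ term from touching $x^n$ is the right bookkeeping. The paper instead exploits the differential structure that pervades Section 2: writing $F_{n,r,s}$ for the right-hand side and $G_{n,r,s}$ for the left, it checks that both families satisfy $D_xF_{n,r,s}=nF_{n-1,r-1,s}$, agree at $x=1$ (where the only nontrivial verification, $\sum_{j=0}^r(-1)^{r-j}\binom{n}{j}\binom{n-j-1}{r-j}=1$, is delegated to Lemma \ref{lem:H.E identities} with $t=n-r$), and coincide in the base case $r=0$; induction on $r$ then forces the polynomials to be equal. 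Your approach is more self-contained and elementary, needing only one standard binomial identity; the paper's approach buys coherence with Corollary \ref{cor:diff eq tower} and reuses machinery already established for Corollary \ref{cor:cohring}, at the cost of invoking Lemma \ref{lem:H.E identities}. Either proof is acceptable.
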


\begin{proof}
Let $F_{n,r,s}(x)$ denote the polynomial $x^n-s\sum_{j=0}^r\binom{n}{j}(x-1)^j$,
and let $G_{n,r,s}(x)$ denote $x^n-s\sum_{j=0}^r(-1)^{r-j}\binom{n}{j}\binom{n-j-1}{r-j}x^j$.
It is easy to see that the derivative satisfies $D_xF_{n,r,s}=nF_{n-1,r-1,s}(x)$, and that
$F_{n,r,s}(1)=1-s$ and $F_{n-r,0,s}(x)=x^{n-r}-s$.
By induction it is enough to show that
$D_xG_{n,r,s}=nG_{n-1,r-1,s}(x)$, $G_{n,r,s}(1)=1-s$, and
$G_{n-r,0,s}(x)=x^{n-r}-s$. This is straightforward, except possibly for checking
$G_{n,r,s}(1)=1-s$, which is obviously equivalent to checking
$\sum_{j=0}^r(-1)^{r-j}\binom{n}{j}\binom{n-j-1}{r-j}=1$.
But $\sum_{j=0}^r(-1)^{r-j}\binom{n}{j}\binom{n-j-1}{r-j}=
\sum_{j=0}^r(-1)^{r-j}\binom{n}{j}\binom{n-j-1}{n-r-1}$, and by reversing the
order if the summation (by replacing $j$ by $r-j$), this becomes
$\sum_{j=0}^r(-1)^j\binom{n}{r-j}\binom{n-r+j-1}{n-r-1}$ and,
which in turn becomes
$\sum_{j=0}^r(-1)^j\binom{n}{r-j}\binom{n-r+j-1}{j}$
by symmetry of binomial coefficients, which is 1 by
Lemma \ref{lem:H.E identities} (take $t=n-r$).
\end{proof}

\renewcommand{\thetheorem}{\thesubsection.\arabic{theorem}}
\setcounter{theorem}{0}

\section{Appendix 2: Calculations}
\subsection{Points in $\pr^n$}
Let $I$ be the ideal of $s$ points in $\pr^n$.
Whereas $\alpha(I^{(m)})$ is very hard to compute for large $m$
except for very special configurations of the $s$ points,
if the points are general it is easy to show that
$$\alpha(I)=\min\left\{t:\, \binom{t+n}{n}-s>0\right\}.$$
The initial degree of the second symbolic power can also be computed,
by a result of Alexander and Hirschowitz \cite{AleHir95};
except for a few exceptions, they prove for $s$ general points in $\pr^n$ that
$$\alpha(I^{(2)})=\min\left\{t:\, \binom{t+n}{n}-s(n+1)>0\right\}.$$
For higher symbolic powers of ideals of $s\leq n+2$ general points,
$\alpha(I^{(m)})$ is in principle known for all $m$, although perhaps not explicitly
(see \cite[Proposition 2.3B]{refI}).
Also, for certain special configurations of points $\alpha(I^{(m)})$ is known for all $m$
and hence $\gamma(I)$ is known (see, for example, \cite[Example 8.3.4, Lemma 8.4.7]{refPSC}
for some examples in $\pr^n$, and see
\url{http://www.math.unl.edu/~bharbourne1/GammaFile.html}
for values of $\gamma(I)$ for all configurations of $s\leq 8$ points
in $\pr^2$, based on results of \cite{refGHM}).
However, little is typically known when $m$ is large for $s\geq n+3$ general points.
It is therefore perhaps surprising that one can establish exact values
of the Waldschmidt constant $\gamma(I)$
for all $s\leq n+3$ general points in $\pr^n$ for all $n$.

\begin{proposition}\label{pro:jn0s}
Let $J_{n,0,s}$ be the ideal of $s\leq n+3$ points in general position in $\PP^n$. Then
\begin{equation}
\label{exmd}
\gamma(J_{n,0,s}) = \begin{cases}
1, & s \leq n; \\
1+\frac{1}{n}, & s=n+1; \\
1+\frac{2}{n}, & s=n+2; \\
1+\frac{2}{n}, & s=n+3, n \text{ is even};\\
1+\frac{2}{n}+\frac{2}{n^3+2n^2-n}, & s=n+3, n \text{ is odd}.
\end{cases}
\end{equation}
\end{proposition}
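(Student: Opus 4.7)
The plan is to handle the five cases of $s$ separately, combining the trivial bound $\alpha(J^{(m)})\ge m$ with case-specific upper bounds and more elaborate lower bounds. The cases $s\le n+1$ are elementary: for $s\le n$ the points lie on a hyperplane $H$ and $H^m\in J^{(m)}$ yields $\gamma=1$, while for $s=n+1$ I would normalize the points to the coordinate vertices $e_0,\ldots,e_n$. A monomial $x_0^{a_0}\cdots x_n^{a_n}$ of degree $t$ vanishes to order $t-a_i$ at $e_i$, so any element of $J^{(m)}_t$ is supported on monomials with $a_i\le t-m$ for all $i$; summing these $n+1$ inequalities gives $t\ge m(n+1)/n$, and the form $x_0\cdots x_n$ realizes this bound.

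For $s=n+2$ the upper bound comes from the product of the $\binom{n+2}{n}$ hyperplanes spanned by $n$-element subsets of the points: this form has degree $\binom{n+2}{2}$ and vanishes to order $\binom{n+1}{2}$ at each point, giving ratio $(n+2)/n$. For the lower bound I would move the points so $n+1$ of them are the coordinate vertices and the last is $(1\!:\!\cdots\!:\!1)$, then apply the standard Cremona involution based at the coordinate vertices: a class $tH-m\sum E_i$ transforms to a class with multiplicities $t-nm$ at the vertices and $m$ at the final point. Iterating, with appropriate reordering of which point is singled out, drives the system toward the fixed point $t/m=(n+2)/n$ by a classical descent.

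The case $s=n+3$ exploits the unique rational normal curve $C$ of degree $n$ through the $n+3$ points. Any form $F\in J^{(m)}_t$ that does not contain $C$ restricts to $C\simeq\P^1$ as a polynomial of degree $nt$ vanishing to order at least $m$ at $n+3$ points, so $nt\ge (n+3)m$; otherwise one peels off a copy of $C$ on the blow-up of $\P^n$ along $C$ and inducts on $m$. This yields $\gamma\ge 1+2/n$ in both parities, matched when $n$ is even by an explicit construction (or by the bound from Theorem \ref{thm:zeroes of Lambda}).

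The main obstacle is the $n$ odd subcase. Here the ratio $(n+2)/n$ is not achievable by any integer pair $(t,m)$ because of a parity obstruction manifesting in the peel-off argument, and the correct value is the slightly larger $(n+1)(n+3)/(n^2+2n-1)=1+2/n+2/(n^3+2n^2-n)$. The upper bound should be realized by a form of degree $(n+1)(n+3)/2$ vanishing to order $(n^2+2n-1)/2$ at each point --- both integers precisely when $n$ is odd --- whose existence is checked by a dimension count on the blow-up along $C$ and the $n+3$ points. Proving sharpness of the lower bound requires refining the peel-off argument to account for the exceptional divisor over $C$ entering the intersection form; this is where I expect the chief technical difficulty to lie.
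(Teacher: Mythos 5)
Your handling of $s\le n+2$ is essentially sound: the monomial count at the coordinate simplex for $s=n+1$ is a clean elementary substitute for the paper's appeal to \cite[Lemma 2.4.1]{refBH}, and for $s=n+2$ your upper bound and Cremona descent are in spirit the paper's argument (in fact a single application of Lemma \ref{lem:cremona} already kills $\sys(h(n+2)-1;(hn)^{\times(n+2)})$, since afterwards the degree drops below one of the multiplicities). The case $s=n+3$, which is the heart of the proposition, has genuine gaps on both sides. For the lower bound, the dichotomy ``either $F|_C\neq 0$, whence $nt\ge (n+3)m$, or $F\supset C$ and one peels off $C$'' breaks down for $n\ge 3$: the rational normal curve has codimension $n-1\ge 2$, so it is not a divisor and cannot be subtracted from a hypersurface; on the blow-up along $C$ the only divisor available is the exceptional divisor $E_C$, and subtracting it neither lowers the multiplicities at the points nor sets up an induction on $m$. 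Worse, the branch your argument does control yields $t/m\ge (n+3)/n=1+\tfrac3n$, which is \emph{strictly larger} than the asserted value $1+\tfrac2n$; hence every near-optimal form lies in the uncontrolled branch ($F$ containing $C$, typically with high multiplicity), and the restriction-to-$C$ argument gives no information about $\gamma$. (The even-$n$ lower bound is actually free by monotonicity in $s$ from the case $s=n+2$ --- this is how the paper gets it --- but that is not the route you describe, and the odd-$n$ lower bound, which you defer, is done in the paper by another Cremona descent terminating in a system of negative degree.)

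Both mechanisms you propose for the upper bounds at $s=n+3$ also fail. Theorem \ref{thm:zeroes of Lambda} only gives $\gamma\le g_{n,0,n+3}=\sqrt[n]{n+3}$, and since $(1+\tfrac2n)^n<n+3$ for all $n\ge 2$, this is strictly weaker than $1+\tfrac2n$. More importantly, the linear systems realizing the upper bounds are \emph{superabundant}, so no dimension count can establish their nonemptiness: for $n=3$ your proposed form has degree $12$ and multiplicity $7$ at $6$ general points of $\P^3$, where $\binom{15}{3}=455<6\binom{9}{3}=504$, so the naive count predicts the system is empty; similarly for $n$ even (e.g.\ degree $3$, multiplicity $2$ at $7$ points of $\P^4$: $\binom{7}{4}=35=7\binom{5}{4}$, expected dimension $-1$). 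Imposing vanishing along $C$ only adds conditions and makes the count worse. The paper produces these sections in both parities by a chain of Cremona transformations (Lemma \ref{lem:cremona}), each step preserving the dimension of the system until one reaches a system that is visibly nonempty (a product of coordinate hyperplanes); some such explicit construction, not a parameter count, is what is missing from your sketch.
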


Before we proceed with the proof, we need to introduce some notation.
Given $s$ general points $p_1,\dots,p_s$ in $\P^n$,
let $J(m_1,\dots,m_s)=I(p_1)^{m_1}\cap\dots\cap I(p_s)^{m_s}$ and let ${\mathcal J}(m_1,\dots,m_s)$
denote the corresponding sheaf of ideals on $\pr^n$. We use the convention
that if $I$ is an ideal in a ring $R$, then $I^{m}=R$ for $m\leq 0$.
We denote by
$$\sys_n(d;m_1,\dots,m_s)=H^0(\P^n,\calo_{\P^n}(d)\otimes {\mathcal J}(m_1,\dots,m_s))$$
and write $\ell_n(d;m_1,\dots,m_s)$ for the dimension of this linear system.
If some of the multiplicities are the same we abbreviate
$$\sys_n(d;m_1^{\times k_1},\dots,m_p^{\times k_p})=
\sys_n(d;\underbrace{m_1,\dots,m_1}_\text{$k_1$ times},\dots,\underbrace{m_p,\dots,m_p}_\text{$k_p$ times}).$$
We omit the index $n$ if it is clear from context.

\begin{lemma}\label{lem:cremona}
For $n\geq 2$, the Cremona transformation $(x_0:\ldots:x_n) \mapsto (\frac{1}{x_0}:\ldots:\frac{1}{x_n})$ of $\P^n$
induces a linear isomorphism
$$\sys_n(d;m_1,\dots,m_s)\mapsto \sys_n(d+c;m_1+c,\dots,m_{n+1}+c,m_{n+2},\dots,m_s),$$
where
$c=(n-1)d-\sum_{j=1}^{n+1} m_j$.
\end{lemma}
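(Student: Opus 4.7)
The plan is to place the first $n+1$ points at the coordinate vertices $p_i = e_{i-1}$ of $\P^n$ by an automorphism of $\P^n$, and then exhibit the isomorphism explicitly as $F \mapsto G := \sigma^* F / M$, where $\sigma:(x_0:\ldots:x_n)\mapsto(\prod_{k\neq 0} x_k:\ldots:\prod_{k\neq n} x_k)$ is the Cremona transformation cleared of denominators, and $M := \prod_{j=0}^n x_j^{m_{j+1}}$. Everything rests on the single monomial pullback identity
$$\sigma^*(y_0^{a_0}\cdots y_n^{a_n}) = \prod_{j=0}^n x_j^{\,d-a_j}, \qquad \textstyle\sum_j a_j = d,$$
which follows directly from the formula for $\sigma$.

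First I would verify that $M$ divides $\sigma^* F$. The condition $F \in I(e_{i-1})^{m_i}$ forces every monomial $y_0^{a_0}\cdots y_n^{a_n}$ appearing in $F$ to satisfy $d - a_{i-1} = \sum_{k\neq i-1} a_k \geq m_i$, so $x_{i-1}^{m_i}$ divides each term of $\sigma^*F$. Ranging $i$ over $1,\ldots,n+1$ gives $M\mid\sigma^*F$, and hence $\deg G = nd - \sum_{j=1}^{n+1} m_j = d+c$, as required.

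Next I would verify the multiplicities. At $e_{i-1}$, the pulled-back monomial $\prod_j x_j^{d-a_j}$ vanishes to order $\sum_{j\neq i-1}(d-a_j) = (n-1)d + a_{i-1} \geq (n-1)d$, hence $\mathrm{mult}_{e_{i-1}}(\sigma^*F) \geq (n-1)d$. Since $\mathrm{mult}_{e_{i-1}}(M) = \sum_{j=1}^{n+1} m_j - m_i$, subtraction yields $\mathrm{mult}_{e_{i-1}}(G) \geq m_i + c$, exactly the bound required. At the remaining general points $p_j$ with $j \geq n+2$, $\sigma$ is an \'etale isomorphism in a neighborhood (general points avoid the coordinate hyperplanes) and $M$ is a non-vanishing unit there, so multiplicities transfer unchanged to the corresponding (Cremona-image) general points in the target linear system.

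Linearity of $F\mapsto G$ is manifest, so to finish it suffices to apply the same construction to $G$ and recover $F$. Using the identity $(\sigma^*)^2 x_i = x_i\cdot(\prod_l x_l)^{n-1}$ together with the observation that the new constant for the target system is $c' = (n-1)(d+c) - \sum_{j=1}^{n+1}(m_j+c) = -c$, a direct bookkeeping shows that the double application returns $F$ (up to the trivial scalar $1$). The main technical content is the combinatorics of monomial exponents; once the pullback identity above is in hand, the divisibility, the degree, and the multiplicity bounds all drop out simultaneously.
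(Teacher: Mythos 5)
Your proof is correct, but it is worth noting that the paper does not actually prove Lemma \ref{lem:cremona} at all: it simply cites \cite{refMD} and \cite[(1.1)]{LafUga07}. So your argument is not a variant of the paper's proof but a self-contained replacement for the citation, and it is the standard one: normalize the first $n+1$ points to the coordinate vertices, pull back monomials via $\sigma^*(y_0^{a_0}\cdots y_n^{a_n})=\prod_j x_j^{d-a_j}$, divide by $M=\prod_j x_j^{m_{j+1}}$, and check degree, divisibility, and multiplicities termwise. All the individual verifications check out: $F\in I(e_{i-1})^{m_i}$ is a monomial condition ($d-a_{i-1}\ge m_i$ for every monomial of $F$), which gives both $M\mid\sigma^*F$ and the bound $\operatorname{mult}_{e_{i-1}}(\sigma^*F)\ge (n-1)d$; subtracting $\operatorname{mult}_{e_{i-1}}(M)=\sum_j m_j-m_i$ gives exactly $m_i+c$; the identity $(\sigma^*)^2x_i=x_i(\prod_l x_l)^{n-1}$ together with $c'=-c$ does show the double application is the identity (indeed $\sigma^*M=\prod_k x_k^{\sum_j m_j - m_{k+1}}$, so $\sigma^*G=F\cdot\prod_k x_k^{m_{k+1}+c}$), which settles bijectivity without needing the multiplicities to be computed exactly. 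Two points you leave implicit but that are harmless: lower bounds on multiplicities suffice because membership in $\sys_n(d+c;\dots)$ only imposes inequalities, and the target system is based at the Cremona images $\sigma(p_j)$ for $j\ge n+2$, which are again general points, matching how the lemma is used in the paper. What your approach buys over the paper's is self-containedness; what the citation buys is brevity and, in the case of \cite{refMD}, a formulation already adapted to repeated application in the style of Section 5.1.
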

\proof See \cite{refMD} or \cite[(1.1)]{LafUga07}.
\endproof

We are now in a position to prove Proposition \ref{pro:jn0s}.

\proofof{Proposition \ref{pro:jn0s}}
If $n=1$, it is easy to see that $\gamma(J_{n,0,s})=s$ and the results hold, so assume $n\geq2$.

\textbf{Case $s\leq n$.} We have $\gamma(I)\geq 1$ for any nonzero homogeneous
ideal $I\subset k[\P^n]$. The claim then follows immediately as $\ell(d;d,\dots,d)\geq 1$.

\textbf{Case $s=n+1$.} By \cite[Lemma 2.4.1]{refBH},
$\alpha(J_{n,0,n+1}^{mn})=m(n+1)$, hence $\gamma(J_{n,0,s})=\lim_mm(n+1)/(mn)=1+(1/n)$.
We can also see this using Cremona transformations.
We have $\ell(1;1^{\times n},0)=1$, which implies
$\ell(n+1;n^{\times(n+1)})\geq 1$ (since $F=F_1\cdots F_{n+1}\in \sys_n(n+1;n^{\times(n+1)})$,
where $F_i$ is a linear form vanishing at the points $\left\{p_1,\dots,p_s\right\}\setminus\left\{p_i\right\}$),
and hence $\ell(h(n+1);(hn)^{\times(n+1)})\geq 1$
(since $F^h\in \sys_n(h(n+1);(hn)^{\times(n+1)})$). This implies $\alpha(J_{n,0,n+1}^{(hn)})\leq h(n+1)$.
We now show $\ell(h(n+1)-1;(hn)^{\times(n+1)})=0$ (and hence $\alpha(J_{n,0,n+1}^{(hn)})= h(n+1)$).
Indeed, Lemma \ref{lem:cremona} gives the first equality in
$$\ell(h(n+1)-1;(hn)^{\times(n+1)})=\ell(-n;(1-n-h)^{\times(n+1)})=0$$
and the second equality is obvious. This implies
$\alpha(J_{n,0,n+1}^{(hn)})/(hn)=h(n+1)/(hn)$ and since the Waldschmidt constant is the limit as $h\to\infty$,
we obtain $\gamma(J_{n,0,n+1})=1+\frac1n$.

\textbf{Case $s=n+2$.}
This can be done using Cremona transformations in a way
similar to the preceding case. Taking the product
$F=F_1\cdots F_{n+2}$ of linear forms $F_i$ vanishing along
$\left\{p_1,\dots,p_s\right\}\setminus\left\{p_i,p_{i+1}\right\}$
(where we read the indices of the points modulo $n+2$, so by
$p_{n+3}$ we mean $p_1$) we obtain a nonzero element $F^h\in\sys(h(n+2);(hn)^{\times(n+2)})$,
hence $\alpha(J_{n,0,n+2}^{(hn)})\leq h(n+2)$. We now apply Lemma \ref{lem:cremona} to show
$$\ell(h(n+2)-1;(hn)^{\times(n+2)})=\ell((h-1)n;((h-1)n-2h+1)^{\times(n+1)},hn).$$
The number on the right is $0$, since no nonzero polynomial
of degree $(h-1)n$ can have a zero of order $hn$.
Hence $\alpha(J_{n,0,n+2}^{(hn)})= h(n+2)$ and thus
$\gamma(J_{n,0,n+2})=\lim_{h\to\infty}h(n+2)/(hn)=1+\frac2n$.

\textbf{Case $s=n+3$.}

\textbf{Subcase $n=2m$.} The lower bound $\gamma(J_{n,0,n+3})\geq (n+2)/n=1+\frac2n$
follows immediately from the previous case. For the reverse inequality we
consider a sequence
$$\sys(q) = \sys\left(m+1-q;\, (m-q+1)^{\times 2q},\, (m-q)^{\times 2m+3-2q}\right),$$
$0\leq q\leq m$, of linear systems, so $\sys(0) = \sys\left(m+1; m^{\times 2m+3}\right)$
and  $\sys(m) = \sys\left(1; 1^{\times 2m},0^{\times 3}\right)$. Taking $\ell(q)=\dim \sys(q)$,
we obviously have $\ell(m)>0$, but applying Lemma \ref{lem:cremona}
to $\sys(q)$ using the $n+1$ points with the highest available multiplicities
gives $\ell(q)=\ell(q+1)$. (We note for each $q$ that $c=-1$, so applying Lemma \ref{lem:cremona}
converts $\sys(q)$ to $\sys(q+1)$.)
Thus $\ell(0)=\cdots=\ell(m)>0$, but $0<\ell(0)$ gives $\alpha(J_{n,0,n+3}^{(m)})\leq m+1$, so
$\alpha(J_{n,0,n+3}^{(hm)})\leq h(m+1)$ and therefore $\gamma(J_{n,0,n+3})\leq 1+\frac1m=1+\frac2n$.

\textbf{Subcase $n=2m+1$.} The proof is similar. For the upper bound on $\gamma(J_{n,0,n+3})$, consider
the linear systems
$$\sys(q) = \sys\left((m+1)(n+3)+qc;(m(n+3)+1+qc)^{\times n+3-2q}\right)$$
for $0\leq q\leq m+1$ where $c=-(n+1)$, so $\sys(0)=\sys\left((m+1)(n+3); (m(n+3)+1)^{\times n+3}\right)$ and
$\sys(m+1) = \sys\left(n+1; n^{\times n+1},(-1)^{\times 2}\right)$. Clearly $\ell(m+1)>0$ (just consider the
$n+1$ coordinate hyperplanes taken together),
and again, applying Lemma \ref{lem:cremona} using the $n+1$
points of highest available multiplicity converts $\sys(q)$ into $\sys(q+1)$,
so we find $\ell(q)=\ell(q+1)$. Thus $\ell(0)>0$ and hence
$$\alpha(J_{n,0,n+3}^{(m(n+3)+1)})\leq (m+1)(n+3),$$
which gives
$$\gamma(J_{n,0,n+3})\leq \frac{(m+1)(n+3)}{m(n+3)+1}=
\frac{(n+1)(n+3)}{(n-1)(n+3)+2}=1+\frac2n+\frac{2}{n^3+2n^2-n}.$$
For the reverse inequality it suffices to check that
$$\ell\left(h(2m+2)(2m+4)-1;\, (h(2m(2m+4)+2))^{\times 2m+4}\right)=0$$
for $h\geq 1$. This time we consider
$$\sys(q) = \sys(d(q);m_1(q)^{\times (2m+4-2q)},m_2(q)^{\times (2q)}),$$
for $0\leq q\leq m+2$, where
$d(q) = ((2m+2)(2m+4)-1+qc$, $m_1(q) = (2m(2m+4)+2+qc$ and
$m_2(q) = (2m(2m+4)+2+(q-1)c$, where $c=-(4m+4)h-2m$.
Applying Lemma \ref{lem:cremona} using the $n+1$
points of highest available multiplicity converts $\sys(q)$ into $\sys(q+1)$,
so $\ell(q)=\ell(q+1)$.
What we want is to show $\ell(0)=0$, and we know $\ell(0)=\cdots=\ell(m+2)$,
but $\sys(m+2)$ has degree $d(m+2)=h(2m+2)(2m+4)-1+(m+2)c=-2m(m+2)-1<0$,
so $\ell(m+2)=0$, as desired.
\endproofof

\begin{example}\label{infptsinP3}
Here we verify that $e_{3,0,4}=3/2$.
Note that $P_{3,0,4,m}(t)=\binom{t+3}{3}-4\binom{m+2}{3}$, and
so $P_{3,0,4,2}(3)=16>0$. Thus the $\inf$ is at most $3/2$.
But $6P_{3,0,4,m}(mx)=m^3(x^3-4)+m^2(6x^2-12)+m(11x-8)$. We must verify
that there is no value of $x$ in the range $1\leq x<3/2$ such that
$x=t/m$ for integers $t\geq m\geq 1$ with
$P_{3,0,4,m}(t)>0$. But $x^3-4$, $6x^2-12$ and $11x-8$ are all
increasing in the range $1\leq x\leq 3/2$, while $6P_{3,0,4,m}(m3/2)$
is negative for $m\geq 6$. Thus we need check only values $t,m$
such that $m\leq t\leq 3m/2$ for $1\leq m\leq 6$.
Doing so shows that the infimum is indeed $3/2$.
\end{example}

\setcounter{theorem}{0}

\subsection{Lines in $\pr^n$, $n\geq 3$}
The initial degree of the ideal $J_{n,1,s}$ of a general union of $s$ lines in $\P^n$ for $n\geq3$
can be computed explicitly by a result of Hartshorne and Hirschowitz
\cite[Theoreme 0.1]{HarHir81}, which gives
$$\alpha(J_{n,1,s})=\min\left\{t:\, \binom{n+t}{n}-s(t+1)>0\right\}.$$
As in the case of points, computing $\alpha(J_{n,1,s}^{(m)})$ for $m>1$
is in general an open problem, but unlike points it is open even for $m=2$. Certainly we have
$$\alpha(J_{n,1,s}^{(2)})\leq\min\left\{t:\, \binom{n+t}{n}-sc_{n,1,2,t}>0, t\geq 2\right\}$$
but it is not known when equality fails or by how much (although it is known that
it can fail; for example, $\alpha(J_{3,1,3})=2$ implies
$\alpha((J_{3,1,3})^{(2)})\leq4$ but $\min\left\{t:\, \binom{3+t}{3}-sc_{3,1,2,t}>0\right\}=5$).

Now we consider various specific cases for general lines in $\PP^3$.
In the next result, note in each case that $\gamma_{3,1,s}\leq g_{3,1,s}$,
as asserted by Theorem \ref{thm:zeroes of Lambda}.

\begin{proposition}\label{5genericlines}
The following holds:
$$\begin{array}{c|c|c}
s & \gamma_{3,1,s} & g_{3,1,s} \\ \hline
1 & 1 & 1 \\
2 & 2 & 2 \\
3 & 2 & \approx 2.584 \\
4 & 8/3 & \approx 3.064 \\
5 & 10/3 & \approx 3.482
\end{array}$$
\end{proposition}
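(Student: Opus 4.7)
The plan is to obtain the $g$-column by direct root finding and the $\gamma$-column by case-by-case geometry. By Proposition \ref{lem:Lambdan0s} (or \eqref{eq:LambdaforLines}), $6\Lambda_{3,1,s}(\tau)=\tau^3-3s\tau+2s$, and Lemma \ref{lem:zeroes of Lambda} identifies $g_{3,1,s}$ with its largest real root. I would factor $\tau^3-3\tau+2=(\tau-1)^2(\tau+2)$ to obtain $g_{3,1,1}=1$, observe that $\tau=2$ is a rational root of $\tau^3-6\tau+4$ with cofactor $\tau^2+2\tau-2$ (whose real roots lie below $1$) to obtain $g_{3,1,2}=2$, and apply Cardano's formula to $\tau^3-3s\tau+2s=0$ for $s=3,4,5$ to produce the stated decimals.

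For $\gamma$ with $s\leq 3$ the main tool is a smooth quadric $Q\cong\pr^1\times\pr^1$ containing the lines as rulings of a single type. For $s=1$ the ideal is generated by two linear forms, so $\alpha(J_{3,1,1}^{(m)})=m$ and $\gamma=1$. For $s=2$, choosing any smooth $Q$ through the two skew lines and letting $F_i$ be a linear form vanishing on $L_i$, the form $(F_1F_2)^m$ gives $\gamma\leq 2$; for the reverse bound I would write a form $f$ of degree $d$ in $J_{3,1,2}^{(m)}$ as $f=Q^k g$ with $Q\nmid g$, so that $g|_Q$ is a nonzero section of $\calo_Q(d-2k,d-2k)$ divisible by the $(m-k)$-th power of each of two distinct $(1,0)$-sections, forcing $d-2k\geq 2(m-k)$, hence $d\geq 2m$. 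For $s=3$ there is a unique smooth $Q$ through the three lines; the form $Q^m$ gives $\gamma\leq 2$, and the same stripping argument, now with three rulings, gives $d-2k\geq 3(m-k)$ when $k<m$ and $d\geq 2k\geq 2m$ when $k\geq m$.

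The upper bounds for $s\in\{4,5\}$ come from an explicit product-of-quadrics construction. For $s=4$, let $Q_i$ be the unique quadric through $\{L_j:j\neq i\}$; then $F=Q_1Q_2Q_3Q_4$ has degree $8$ and vanishes to order $3$ on each $L_i$, since exactly three of the $Q_j$ contain any given $L_i$. Thus $F^k\in J_{3,1,4}^{(3k)}$ and $\gamma\leq 8/3$. For $s=5$ the analogous product over all $\binom{5}{3}=10$ quadrics through triples of lines has degree $20$ and vanishes to order $\binom{4}{2}=6$ on each line, yielding $\gamma\leq 10/3$.

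The main obstacle is the matching lower bounds for $s=4$ and $s=5$. A naive restriction to a single quadric containing only three of the lines is insufficient, because the remaining line(s) meet such a quadric only in isolated points. Following the strategy of \cite{refGHVT}, I would specialize the configuration to a position in which extra incidences among the lines (and possibly their transversals) force any divisor of too-small degree vanishing to order $m$ on the lines to split off a geometrically predetermined curve; stripping that curve off then reduces the problem either to an ideal of fewer lines already handled or to a dimension count that fails, in either case producing a contradiction and pinning down $\gamma$ at the claimed rational values.
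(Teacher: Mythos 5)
Your computation of the $g$-column and all of the upper bounds on $\gamma$ are correct, and your quadric-stripping arguments for $s\leq 3$ are a legitimate, self-contained replacement for the paper's treatment (the paper proves only the $s=5$ case in-house and cites \cite{refGHVT2} for $s=2,3,4$). The genuine gap is exactly where you flag it: the lower bounds $\gamma_{3,1,4}\geq 8/3$ and $\gamma_{3,1,5}\geq 10/3$, which are the substantive content of the proposition, are not proved. ``Specialize the lines so that any divisor of too-small degree splits off a predetermined curve'' is a strategy statement, not an argument: you have not named the degeneration, identified the curve to be split off, or checked the numerics that would make the induction close at exactly $8/3$ and $10/3$ rather than at some weaker bound. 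As it stands, the cases $s=4,5$ remain open in your write-up.

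For comparison, the paper's proof of the $s=5$ lower bound does not specialize the lines at all. It blows up the five general lines to get $\pi\colon X\to\pr^3$, sets $F_t=(10t-1)H-3tE$, and shows $h^0(\calo_X(F_t))=0$ by induction on $t$: it subtracts the proper transforms $D_{123},D_{145},D_{234},D_{125},D_{345}$ of five quadrics one at a time (their sum is linearly equivalent to $10H-3E$, so after all five subtractions one lands on $F_{t-1}$), and kills $h^0$ of each restriction term by pairing with the nef class $C_{ijl}(1,2)-e_1-\cdots-e_4$ on $D_{ijl}$, which yields intersection number $-3$ at every step; the base case is $F_0=-H$. An analogous cascade (or an appeal to the $\pr^1\times\pr^1$ results of \cite{refGHVT2}) is what you would need to supply for $s=4$ as well. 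I suggest either adopting that blow-up induction or, if you want to pursue the degeneration route, at minimum exhibiting the special position and carrying out the Bezout/splitting count explicitly.
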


\begin{proof}
The case $s=1$ is easy. For $s=2,3$ and 4, see \cite[Corollary 1.1]{refGHVT2} (version 2).
For $s=5$, let $I=J_{3,1,5}$ be the ideal of $s=5$ general lines $L_1,\ldots,L_5\subset\pr^3$.
We can easily see that $\gamma(I)\leq 10/3$, since there is a unique
quadric $Q_{ijl}$ containing lines $L_i,L_j,L_l$ for each choice of
$1\leq i<j<l\leq 5$. Thus $Q_{123},Q_{234},Q_{345},Q_{145},Q_{125}$
together give a hypersurface $H$ of degree 10 with multiplicity $m=3$ along each
of the five lines. Thus $\gamma(I)\leq \deg(H)/m=10/3$. To prove that
$\gamma(I)=10/3$ it will suffice to show that $\alpha(I^{(3m)})\geq 10m$;
i.e., that $(I^{(3m)})_{10m-1}=0$.

To do this blow up the five lines to obtain the morphism $\pi:X\to\pr^3$.
Let $E_i=\pi^{-1}(L_i)$. Let $D_{ijl}$ be the proper transform of $Q_{ijl}$.
We may assume the lines $L_i$, $L_j$ and $L_l$ are $(1,0)$-classes on $Q_{ijl}$.
Note that $D_{123}$ is isomorphic to the blow up of $Q_{123}$
at the 2 points of intersection of $Q_{123}$ with $L_4$
and also at the 2 points of intersection with $L_5$.
We will denote by $C_{ijl}(a,b)$ a divisor of type $(a,b)$ on $Q_{ijl}$, and we will abuse notation
and use that also for its pullback to $D_{ijl}$.
Thus the divisor class group
of $D_{ijl}$ is spanned by the classes of $C_{ijl}(0,1)$, $C_{ijl}(1,0)$,
and by the classes of the exceptional divisors $e_1,e_2,e_3$ and $e_4$ of the
four points of $Q_{ijl}$ blown up. (Since it will always be clear which $D_{ijl}$ we're referring to,
we will not need to use notation to distinguish $e_1$ on $D_{ijl}$
from $e_1$ on another divisor $D_{i'j'l'}$.)
We will use $H$ to denote the hyperplane class on $\pr^3$ and also its pullback to
$X$. We will use $E$ to denote $E_1+\cdots+E_5$ and $\sim$ to denote linear equivalence.
Note that $D_{ijl}\sim 2H-E_i-E_j-E_l$ and so $D_{123}+D_{145}+D_{234}+D_{125}+D_{345}\sim 10H-3E$.

We have the following short exact sequences, where $F_t=(10t-1)H-3tE$:

{\small

\[
\begin{split}
0&\to{\mathcal O}_X(F_t-D_{123})\to{\mathcal O}_X(F_t)\to {\mathcal O}_{D_{123}}((t-1,10t-1)-3t(e_1+\cdots+e_4))\to0;\\
\\
0&\to {\mathcal O}_X(F_t-D_{123}-D_{145})\to{\mathcal O}_X(F_t-D_{123})\\
&\to{\mathcal O}_{D_{145}}((t-2,10t-3)-(3t-1)(e_1+\cdots+e_4))\to0;\\
\\
0&\to {\mathcal O}_X(F_t-D_{123}-D_{145}-D_{234})\to {\mathcal O}_X(F_t-D_{123}-D_{145}) \\
&\to{\mathcal O}_{D_{234}}((t-2,10t-5)-(3t-2)(e_1+e_2)-(3t-1)(e_3+e_4))\to0;\\
\\
0 & \to
{\mathcal O}_X(F_t-D_{123}-D_{145}-D_{234}-D_{125}) \to {\mathcal O}_X(F_t-D_{123}-D_{145}-D_{234}) \\
&\to {\mathcal O}_{D_{125}}((t-2,10t-7)-(3t-2)(e_1+\cdots+e_4))\to0;\\
\\
0&\to {\mathcal O}_X(F_{t-1}) \to {\mathcal O}_X(F_t-D_{123}-D_{145}-D_{234}-D_{125}) \\
&\to{\mathcal O}_{D_{345}}((t-3,10t-9)-(3t-3)(e_1+\cdots+e_4))\to0.
\end{split}
\]

}
It is easy to check that $|C_{ijl}(1,2)-e_1-\cdots-e_4|$ is nonempty and fixed component free,
and hence it is nef on $D_{ijl}$ but the intersection with the classes
on the right end of each short exact sequence is $-3$ in each case
(for example, for the third exact sequence, we have
$(C_{234}(t-2,10t-5)-(3t-2)(e_1+e_2)-(3t-1)(e_3+e_4))\cdot (C_{234}(1,2)-e_1-\cdots-e_4)=-3$)), so
we see that $h^0=0$ for the ${\mathcal O}_{D_{ijl}}$ term in each exact sequence.
Thus $h^0({\mathcal O}_X(F_{t-1}))=0$ implies
all of the $h^0$ terms in all of the sequences vanish.
But $h^0({\mathcal O}_X(F_{0}))=h^0({\mathcal O}_{\pr^3}(-1))=0$,
so by induction we see $h^0({\mathcal O}_X(F_{t}))=0$ for all $t\geq 0$.
\end{proof}

\begin{remark}\label{6genericlines}
Consider the ideal $I$ of 6 generic lines in $\pr^3$. Then
$P_{3,1,6,7}(27)=28$ is positive,
so $\gamma(I)\leq t/m=27/7 \approx 3.857142<g_{3,1,6} \approx 3.85878$,
but we can get an even better bound, coming from six forms of degree 7,
each vanishing to order 1 on different choices of one of the lines
and order 2 on the other five giving $\gamma(I)\leq 42/11 \approx 3.818$.
Computer calculations suggest that there is a form of degree 12
vanishing with order 4 on one line and order 3 on the five other general lines,
although $P_{3,1,(4,3,3,3,3,3)}(12)<0$, so we would ``expect'' that no such form should exist.
Given that it does exist, averaging the orders of vanishing gives an even better
bound of $\gamma(I)\leq 72/19 \approx 3.78947$.
\end{remark}

Proposition \ref{5genericlines} and Remark \ref{6genericlines}
give examples for which $\gamma_{3,1,s} < g_{3,1,s}$.
What underlies these examples
is the occurrence of integers $m_1,\ldots,m_s\geq 0$ and $d\geq \max_i(m_i)>0$
with $P_{3,1,(m_1,\ldots,m_s)}(d)>0$ but such that
$d/m<g_{3,1,s}$, where $m$ is the average of the $m_i$.
Having $P_{3,1,(m_1,\ldots,m_s)}(d)>0$ guarantees by Lemma \ref{lem:HilbPolyn}
the occurrence of a nonzero
form $F$ of degree $d$ vanishing to order $m_i$ on line $L_i$
for generic lines $L_1,\ldots,L_s\subset\pr^3$.
This implies $\gamma_{3,1,s}\leq d/m$, so we obtain
$\gamma_{3,1,s} < g_{3,1,s}$. For example, consider the result of
Proposition \ref{5genericlines} for $s=5$ general lines. In this case
$P_{3,1,(1,1,1,0,0)}(2)=4>0$, so there is a quadric,
$Q$, containing three of the five lines, so $d=2$ and $m=3/5$. This gives
$\gamma_{3,1,s} \leq d/m=2/(3/5)=10/3< 3.582\approx g_{3,1,s}$.
Finding such examples for infinitely many $s$ would contradict Conjecture {\conjC}.
However, the following result shows that no such reducible examples, built out of
components corresponding to positive values of a Hilbert polynomial, occur
when $s\geq 7$. (This does not rule out the possibility of reducible submaximal examples
whose components correspond to negative values of a Hilbert polynomial,
such as what seems to be the case for the upper bound of
$72/19$ in Remark \ref{6genericlines} for six general lines.)

\begin{theorem}
\label{nosymetry}
Let ${\bf v}=(m_1,\ldots,m_s)$ be an integer vector with $s\geq7$ entries $m_j\geq 0$, but not all 0,
such that $d\geq m_j$ for all $j$. Let $m=(\sum_j m_j)/s$. If $P_{3,1,{\bf v}}(d)> 0$, then
$$\frac{d}{m} \geq g_{3,1,s}.$$
\end{theorem}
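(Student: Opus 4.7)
The plan is a reduction to a canonical ``almost-uniform'' configuration followed by an explicit polynomial analysis keyed on the defining equation for $g_{3,1,s}$. Using the identity $c_{3,1,m,d}=\sum_{i=0}^{m-1}(d+1-i)(i+1)$ from Lemma \ref{lem:NumCond} with $n=3,r=1$, and setting $n_j=|\{i:m_i>j\}|$ and $a_j=(d+1-j)(j+1)$, one rewrites $\sum_i c_{3,1,m_i,d}=\sum_{j\ge 0}n_j a_j$, where $(n_j)$ is non-increasing, bounded by $s$, and sums to $sm$. I would first prove a discrete rearrangement inequality: in the range where $a_j$ is non-decreasing, namely $j\le(d-1)/2$, the minimum of $\sum n_j a_j$ over admissible $(n_j)$ is attained by the staircase $n_j=s$ for $j<q$, $n_q=r$, $n_j=0$ for $j>q$, where $sm=sq+r$, $0\le r<s$. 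This staircase corresponds to the almost-uniform vector ${\bf v}_{q,r}$ with $r$ entries $q+1$ and $s-r$ entries $q$, and gives
$$P_{3,1,{\bf v}}(d)\;\le\;P_{\text{sym}}(d,q)-r(q+1)(d+1-q),$$
where $P_{\text{sym}}(d,q):=\binom{d+3}{3}-s\,c_{3,1,q,d}$.

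Next I would split into two regimes. If $d\le 2q$, then $d/m\le d/q\le 2<g_{3,1,s}$ automatically, so the claim reduces to verifying $P_{3,1,{\bf v}}(d)\le 0$ directly; this follows from coarse bounds on $\sum c_{3,1,m_i,d}$ in the concave portion $m_i\ge(d+1)/2$ of $c_{3,1,\cdot,d}$, combined with $\binom{d+3}{3}\le s\,c_{3,1,d,d}$ (valid already for $s\ge 2$). If instead $d>2q$, then $q<(d-1)/2$, the above rearrangement applies, and everything reduces to the almost-uniform claim: $P_{3,1,{\bf v}_{q,r}}(d)>0\Rightarrow d\ge g_{3,1,s}(q+r/s)$.

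For the symmetric sub-case $r=0$ of this almost-uniform claim, I would substitute $d=gq$ with $g=g_{3,1,s}$ into $6P_{\text{sym}}(d,q)=(d+1)(d+2)(d+3)-sq(q+1)(3d-2q+5)$. The $q^3$-contribution cancels by the defining equation $g^3-3sg+2s=0$, and using $g^2=3s-2s/g$ one is left with
$$6P_{\text{sym}}(gq,q)=3s\Bigl(5-g-\frac{4}{g}\Bigr)q^2+(11g-5s)q+6.$$
Since $5-g-4/g<0$ iff $g>4$, and since $g_{3,1,s}$ is strictly increasing in $s$ with $g_{3,1,6}<4<g_{3,1,7}$, the leading coefficient is strictly negative precisely under the hypothesis $s\ge 7$. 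Hence $P_{\text{sym}}(gq,q)\le 0$ for every $q$ beyond an explicit threshold $q_0=q_0(s)$, which gives $d_0(q)\ge gq$ for such $q$ (where $d_0(q)$ is the positive root of $P_{\text{sym}}(d,q)=0$), so any integer $d$ with $P_{\text{sym}}(d,q)>0$ satisfies $d\ge gq$. The finitely many small $q\le q_0$ are handled by a direct check that $\lceil d_0(q)\rceil\ge g_{3,1,s}q$. The correction $-r(q+1)(d+1-q)$ for $r>0$ pushes $P$ further down, and a parallel substitution $d=g(q+r/s)$ in the analogous cubic handles the mixed case.

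The main obstacle is the small-$q$ verification: the theorem is close to sharp, as shown by failures already at $s=6$ (e.g., $q=7$, $d=27$, where $P_{\text{sym}}=28>0$ with $d/q<g_{3,1,6}$), so one must control uniformly in $s\ge 7$ that the real-to-integer gap $g_{3,1,s}(q+r/s)-d_0$ stays strictly below $1$. This ultimately comes down to the strict inequality $g_{3,1,s}>4$ under our hypothesis. A secondary technical point is justifying the rearrangement step, since $(a_j)$ is not globally monotone; this is precisely why the case $d\le 2q$ is handled separately.
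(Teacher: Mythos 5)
Your final polynomial analysis is sound and in one respect cleaner than what the paper does: substituting $d=gq$ into $6P_{\mathrm{sym}}$, killing the $q^3$ term with $g^3-3sg+2s=0$, and rewriting the quadratic coefficient as $6g^2-3sg-3s=3s(5-g-4/g)$, which is negative exactly when $g>4$, i.e.\ exactly when $s\geq 7$ (since $\Lambda_{3,1,s}(4)=(64-10s)/6$ changes sign between $s=6$ and $s=7$), pinpoints why the hypothesis $s\geq 7$ is the right one; the paper instead verifies $6g^2-3sg-3s<0$ by the bound $g\leq s/2$ for $s\geq 11$ and by explicit computation for $7\leq s\leq 12$. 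The residual finite check for small $q$ (equivalently, the paper's table for $7\leq s\leq 12$ and the enumeration of small multiplicity vectors) is unavoidable in both treatments.

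However, the reduction to the almost-uniform configuration has a genuine gap. Your rearrangement inequality is only asserted, and is only true, ``in the range where $a_j$ is non-decreasing,'' i.e.\ for $j\leq (d-1)/2$; but the support of $(n_j)$ extends up to $j=\max_i m_i-1$, which can be as large as $d-1$. Your case split is on $d\leq 2q$ versus $d>2q$ with $q=\lfloor m\rfloor$ built from the \emph{average} multiplicity, so it does nothing to exclude, in the regime $d>2q$, vectors with one or two entries $m_i$ well beyond $(d+1)/2$. For such an entry the weights $a_j=(d+1-j)(j+1)$ have already come back down the parabola ($a_j=a_{d-j}$), so transporting that mass down to fill the head of the staircase can \emph{increase} $\sum_j n_ja_j$; equivalently, $c_{3,1,\cdot,d}$ is convex only on $[0,(d+1)/2]$ and concave beyond, so the Jensen-type inequality underlying $P_{3,1,{\bf v}}(d)\leq P_{\mathrm{sym}}(d,q)-r(q+1)(d+1-q)$ goes the wrong way for large entries. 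The paper confronts exactly this difficulty with a different case split: first it disposes of the case $m_{j_1}+m_{j_2}>d$ for some pair by the substitution $t=m_1+m_2-d-1$ and a direct factorization showing $P_{3,1,{\bf v}}(d)\leq 0$ outright; only under the complementary hypothesis $m_{j_1}+m_{j_2}\leq d$ for all pairs does it run an equalization argument (a first-order perturbation in $\varepsilon$ whose sign computation $6i(m_1+m_{i+1}-d-1)(m_{i+1}-m_1)<0$ \emph{uses} that hypothesis) to reduce to all $m_j$ equal. To repair your argument you would need an analogous preliminary case eliminating vectors with large entries --- note that $m_{j_1}+m_{j_2}\leq d$ for all pairs still permits one entry above $(d+1)/2$, so even then the monotone-range rearrangement does not literally apply and some version of the paper's smoothing step, or a separate treatment of the single large entry, is required. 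The sketched treatment of the $d\leq 2q$ case and of the mixed case $r>0$ is also too thin to assess, but those look like fillable details; the rearrangement step is the real obstruction.
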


\begin{proof}
Since $d\geq m_j$, we have $P_{3,1,{\bf v}}(d)=\binom{d+3}{3}-\sum_jc_{3,1,m_j,d}$,
and by Lemma \ref{combinatorialLemma2}(c), we have
$c_{3,1,m_j,d}=\frac{1}{6}(3d-2m_j+5)m_j(m_j+1)$. To show
the contrapositive, given $d/m< g_{3,1,s}$, it suffices to show that
\begin{equation}\label{bettereq}
\binom{d+3}{3} - \sum_j \frac{1}{6}(3d-2m_j+5)m_j(m_j+1) < 1.
\end{equation}
It is easy to check that
$(3d-2m+5)m(m+1)/6$ is an increasing function of $m$ on the interval $(0,d)$.
Thus the terms in the sum $\sum_j \frac{1}{6}(3d-2m_j+5)m_j(m_j+1)$
are nonnegative.

First consider the case when for some $j_1\neq j_2$ we have $m_{j_1}+m_{j_2} > d$.
Without loss of generality we may assume that $j_1=1$, $j_2=2$ and $m_1\leq m_2\leq d$.
Hence $P_{3,1,{\bf v}}(d)$ is bounded above by
$\binom{d+3}{3} - c_{3,1,m_1,d}-c_{3,1,m_2,d}$.
If we write $t=m_1+m_2-d-1$, then $d=m_1+m_2-t-1$ and $0\leq t\leq m_1-1$.
By substituting $d=m_1+m_2-t-1$ into $\binom{d+3}{3} - c_{3,1,m_1,d}-c_{3,1,m_2,d}$
we obtain
\[
\begin{split}
P_{3,1,{\bf v}}(d)\leq &\binom{d+3}{3} - c_{3,1,m_1,d}-c_{3,1,m_2,d}\\
=&-6m_2m_1t+3m_2t^2+3m_1t^2-3m_2t-3m_1t-t^3+3t^2-2t\\
=&-3t(2m_1m_2-(m_1+m_2)t)-3m_2t-3m_1t-t(t-1)(t-2)\leq0.
\end{split}
\]
To justify the inequality, we just must verify that each term is nonpositive.
But $t(t-1)(t-2)$ is nonnegative for integer values of $t\geq 0$, and
$2m_1m_2-(m_1+m_2)t\geq 2m_1m_2-(m_1+m_2)(m_1-1)=(m_2-m_1)m_1+m_1+m_2\geq0$,
since $0\leq t\leq m_1-1$. Thus $P_{3,1,{\bf v}}(d)\leq0$ in this case.

For the next case, assume that $m_{j_1}+m_{j_2} \leq d$ for all $j_1\neq j_2$.
We now will treat $P_{3,1,{\bf v}}$ as a polynomial in the $m_j$, so values for each
$m_j$ need not be integers.
Observe that $P_{3,1,{\bf v}}(d)$ has the greatest possible value if
the sum in \eqref{bettereq}
has the smallest possible value. Consider the situation when not all $m_j$'s are equal. Without
loss of generality, assume $0\leq m_1= m_2= \cdots=m_i <m_{i+1}\leq m_{i+2}\leq\cdots\leq m_s$.
Regard $\frac{i}{6}(3d-2(m_1+\varepsilon)+5)(m_1+\varepsilon)(m_1+\varepsilon+1)
+ \frac{1}{6}(3d-2(m_{i+1}-i\varepsilon)+5))(m_{i+1}-i\varepsilon)(m_{i+1}-i\varepsilon+1)$
as a polynomial in $\varepsilon\geq0$. Its derivative
with respect to $\varepsilon$ evaluated at $\varepsilon=0$ is equal to
$$6i(m_1+m_{i+1}-d-1)(m_{i+1}-m_1),$$
which is negative.
It follows that replacing $m_1,\ldots,m_i$ by $m_1+\varepsilon,\ldots,m_i+\varepsilon$
and $m_{i+1}$ by $m_{i+1}-i\varepsilon$
for small positive values of $\varepsilon$ does not change $m_1+\cdots+m_{i+1}$
(and hence leaves $m$ unchanged) but increases
$P_{3,1,{\bf v}}(d)$. Thus it is enough to consider the case
that $m_1=\cdots=m_{i+1}$, and repeating the argument we reduce to the case that
$m_1=\cdots=m_s=m$.

By the substitution $d=m\tau$, \eqref{bettereq} is equivalent to
$$m^3(\tau^3-3s\tau+2s)+m^2(6\tau^2-3s\tau-3s)+m(11\tau-5s) < 0.$$
Assume $1\leq\tau\leq g_{3,1,s}$. Then $\tau^3-3s\tau+2s\leq 0$, so it is enough to show that
$$(6\tau^2-3s\tau-3s)m+(11\tau-5s) < 0.$$
But $(6\tau^2-3s\tau-3s)m+(11\tau-5s)$ is quadratic in $\tau$, so to show it is negative
it is enough to check negativity for $\tau=1$ and $\tau=g_{3,1,s}$.
For $\tau=1$ it evaluates to $(6-6s)m+(11-5s)$, which is negative for $s\geq 3$
(and so certainly for $s\geq 7$).
Thus now it is enough to show $(6g^2-3sg-3s)m+(11g-5s) < 0$
for $g=g_{3,1,s}$.

To do so, note that $6g^2-3sg-3s<0$ for $s\geq 11$. (This is because $\Lambda_{3,1,s}(s/2)>0$
for $s\geq 11$, so $g\leq s/2$ for $s\geq 11$ by Lemma \ref{lem:zeroes of Lambda}(b),
but $g\leq s/2$ implies $6g^2-3sg-3s<0$.) Similarly, $11g-5s<0$ for $s\geq 13$ (since
$\Lambda_{3,1,s}(5s/11)>0$ for $s\geq 13$). Thus the result is proved for $s\geq 13$.

For $s=7,\ldots,12$, we can check by explicit computation, that
$6g^2-3sg-3s<0$. Thus $(6g^2-3sg-3s)m+(11g-5s) < 0$ is equivalent to
$$m > - \frac{11g-5s}{6g^2-3sg-3s}.$$
Therefore the cases that are left have $7\leq s\leq 12$ and
$m \leq - \frac{11g-5s}{6g^2-3sg-3s}$.
Hence for $7\leq s\leq 12$, we need only check \eqref{bettereq} for
cases such that $d<-g\frac{11g-5s}{6g^2-3sg-3s}$ and
$ds/g<\sum_jm_j\leq -s\frac{11g-5s}{6g^2-3sg-3s}$,
where now the $m_j$ and $d$ are integers
with $m_{j_1}+m_{j_2}\leq d$ for all $j_1\neq j_2$.
By direct calculation we find the following data.

$$\begin{array}{c|c|c|c|c}
s & g & \text{upper bound on }d & \text{upper bound on }\sum_jm_j \\ \hline
7 & 4.2035 & 14.5043 & 24.1538 \\
8 & 4.5236 & 4.51017 & 7.97625 \\
9 & 4.82374 & 2.20558 & 4.11512 \\
10 & 5.10725 & 1.18148 & 2.31334 \\
11 & 5.37664 & 0.602377 & 1.23239 \\
12 & 5.63383 & 0.229665 & 0.489184
\end{array}$$

If $d=1$, then $d\geq m_j$ implies $m_j\leq 1$ for all $j$,
but $m_{j_1}+m_{j_2}\leq d$ now implies (up to reindexing) that
$m_1=1$ and all other $m_j=0$, in which
case $d/m<g$ implies $s<g$, which we see from the table does not happen.
Thus we may assume $d\geq 2$. In particular, cases $s=10,11,12$ are immediate.
For $s=9$, we must have $d=2$ and $3.73=ds/g\leq\sum_jm_j\leq 4.1$,
so $\sum_jm_j=4$. There are three cases:
$m_1=\cdots=m_4=1$ and all other $m_j=0$;
$m_1=2, m_2=m_3=1$ and all other $m_j=0$; and
$m_1=m_2=2$ and all other $m_j=0$.
In each case we find $P_{3,1,{\bf v}}(d)\leq0$.
For $s=8$, either $d=2$ or $d=3$; for $d=2$ there are 14 possibilities for the $m_j$
and for $d=3$ there are 15 possibilities. In each case $P_{3,1,{\bf v}}(d)\leq0$.

Finally, for $s=7$ there are 4149 possible multiplicity sequences
$0\leq m_1\leq \cdots\leq m_7$ with $7m=m_1+\cdots+m_7\leq 24$.
For each such multiplicity sequence, and for each $2\leq d\leq 14$ such that
$m_7\leq d$ and $d/m<g$, it is easy to write a computer program to check
that $P_{3,1,{\bf v}}(d)\leq0$, which verifies the case $s=7$.
\end{proof}

\begin{example}\label{inflinesinP3}
Here we verify that $\inf\left\{\frac{t}{m}:\, t\geq m\geq 1,\, P_{3,1,6,m}(t)>0 \right\}=27/7$.
Note that $P_{3,1,6,m}(t)=\binom{t+3}{3}-(3t-2m+5)(m+1)m$, and
so $P_{3,1,6,7}(27)>0$. Thus the $\inf$ is at most $27/7$.
But $6P_{3,1,6,m}(mx)=m^3(x^3-18x+12)+m^2(6x^2-18x-18)+m(11x-30)$.
Note that all terms are negative when $x<30/11$. Thus it is enough to verify
that there is no value of $x$ in the range $30/11\leq x<27/7$ such that
$x=t/m$ for integers $t\geq m\geq 1$ with
$P_{3,1,6,m}(t)>0$. But $x^3-18x+12$, $6x^2-18x-18$ and $11x-30$ are all
increasing in the range $30/11\leq x\leq 27/7$, while $6P_{3,1,6,m}(m27/7)$
is negative for $m\geq 50$. Thus we need check only values $t,m$
such that $30m/11\leq t\leq 27m/7$ for $1\leq m\leq 50$.
Doing so shows that the $\inf$ is indeed $27/7$.
\end{example}

\setcounter{theorem}{0}

\subsection{Additional Examples}

Let $I$ be the ideal of $s$ generic $r$-planes in $\pr^n$ with $n\geq 2r+1$.
When $s=1$ we have $\gamma(I)=1$ (since $I^{(m)}=I^m$, hence $\gamma(I)=\alpha(I)$),
and we also have $g_{n,r,1}=1$.

As another example, using \eqref{eq:LambdaforLines}, when $n=3$, we have
$g_{3,1,2}=2$, and indeed for the ideal $I$ of two skew lines $L_1,L_2\subset\pr^3$ we have
$\gamma(I)=2$. (This is because $I^{(m)}=I(L_1)^m\cap I(L_2)^m=(I(L_1)I(L_2))^m$
by \cite{refGHVT}, hence $\alpha(I^{(m)})=m(\alpha(I(L_1))+\alpha(I(L_2)))=2m$,
so $\gamma(I)=2$.) This recovers the $s=2$ case of Proposition \ref{5genericlines}.
This is a special case of $s=(n-1)^{n-2}$ lines in $\pr^n$ for which we showed in section
\ref{section4} that $\gamma(I)=g_{n,1,s}=n-1$. It is also a special case of another infinite family.
Let $I$ be the ideal of two generic $r$-planes in $\pr^n$ for $n=2r+1$. We then
have $\gamma(I)=2$ (since $I^{(m)}=I^m$ \cite[Lemma 4.1]{refGHVT}, hence $\gamma(I)=\alpha(I)$).
We now show that $g_{2r+1,r,2}=2$ for all $r\geq 1$,
extending the $s=2$ case of Proposition \ref{5genericlines} to arbitrary $r\geq 1$.

\begin{lemma}
For all $r\geq 1$, we have $g_{2r+1,r,2}=2$.
\end{lemma}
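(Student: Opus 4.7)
The plan is to evaluate $\Lambda_{2r+1,r,2}(\tau)$ directly at $\tau=2$ and show it vanishes, then invoke the uniqueness clause of Lemma \ref{lem:zeroes of Lambda} to conclude that this root is indeed $g_{2r+1,r,2}$.

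By Proposition \ref{lem:Lambdan0s} with $n=2r+1$ and $s=2$, one has
$$
(2r+1)!\,\Lambda_{2r+1,r,2}(\tau)=\tau^{2r+1}-2\sum_{j=0}^{r}\binom{2r+1}{j}(\tau-1)^{j}.
$$
Substituting $\tau=2$ gives $(\tau-1)^j=1$ for each $j$, so the vanishing of $\Lambda_{2r+1,r,2}(2)$ reduces to the elementary binomial identity
$$
2^{2r+1}=2\sum_{j=0}^{r}\binom{2r+1}{j},\qquad\text{i.e.}\qquad \sum_{j=0}^{r}\binom{2r+1}{j}=2^{2r}.
$$
This identity follows immediately from the symmetry $\binom{2r+1}{j}=\binom{2r+1}{2r+1-j}$: the two halves of the binomial sum $\sum_{j=0}^{2r+1}\binom{2r+1}{j}=2^{2r+1}$ indexed by $0\leq j\leq r$ and by $r+1\leq j\leq 2r+1$ match up under $j\mapsto 2r+1-j$, hence each equals $2^{2r}$.

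Thus $\tau=2$ is a real root of $\Lambda_{2r+1,r,2}$ with $2\geq 1$. By Lemma \ref{lem:zeroes of Lambda} the polynomial $\Lambda_{2r+1,r,2}(\tau)$ has a unique real root which is $\geq 1$, and that root is by definition $g_{2r+1,r,2}$. Therefore $g_{2r+1,r,2}=2$, as required.

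There is no real obstacle: the argument is a one-line binomial symmetry computation combined with the uniqueness of the distinguished real root already established in Lemma \ref{lem:zeroes of Lambda}. The only point worth emphasizing is that by Proposition \ref{lem:Lambdan0s} the form of $\Lambda_{n,r,s}$ in terms of $(\tau-1)^j$ is exactly what makes $\tau=2$ so convenient to test.
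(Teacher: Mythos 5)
Your argument is correct and is essentially identical to the paper's proof: both evaluate $\Lambda_{2r+1,r,2}(2)$ via Proposition \ref{lem:Lambdan0s}, reduce to the identity $2\sum_{j=0}^{r}\binom{2r+1}{j}=2^{2r+1}$ via the symmetry of binomial coefficients, and then appeal to the uniqueness of the real root $\geq 1$ established in Lemma \ref{lem:zeroes of Lambda} (equivalently Theorem \ref{thm:zeroes of Lambda}). No further comment is needed.
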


\begin{proof}
It is enough by Theorem \ref{thm:zeroes of Lambda} to show that
$\Lambda_{2r+1,r,2}(2)=0$. By Proposition \ref{lem:Lambdan0s}, we have
$$\Lambda_{2r+1,r,2}(2)=\frac{1}{(2r+1)!}\Big(2^{2r+1}-2\sum_{j=0}^r\binom{2r+1}{j}\Big),$$
but
$2\sum_{j=0}^r\binom{2r+1}{j}$ can be written as
$\Big(\binom{2r+1}{0}+\cdots+\binom{2r+1}{r}\Big)
+\Big(\binom{2r+1}{r}+\cdots+\binom{2r+1}{0}\Big)$
and thence as
$\Big(\binom{2r+1}{0}+\cdots+\binom{2r+1}{r}\Big)
+\Big(\binom{2r+1}{r+1}+\cdots+\binom{2r+1}{2r+1}\Big)
=(1+1)^{2r+1}$, and so $\Lambda_{2r+1,r,2}(2)=0$.
\end{proof}

\paragraph*{\emph{Acknowledgement.}}
   We would like to thank Armen Edigarian, Alex Kuronya, Thomas Bauer, Joaquim Ro\'e and Ciro Ciliberto for helpful discussions.
   The second author's work on this project
was sponsored by the National Security Agency under Grant/Cooperative
agreement ``Advances on Fat Points and Symbolic Powers,'' Number H98230-11-1-0139.
The United States Government is authorized to reproduce and distribute reprints
notwithstanding any copyright notice.
   Parts of this paper
   were written while the third author was a visiting professor at the University
   Mainz as a member of the program Schwerpunkt Polen. Szemberg's research
   was partially supported by NCN grant UMO-2011/01/B/ST1/04875.

\end{document}